
\documentclass[11pt]{article} 


\usepackage[normalem]{ulem}

\usepackage{a4wide}
\usepackage{amsthm, amssymb, amsmath,hyperref,mathrsfs}
\usepackage{tikz-cd}
\usepackage{comment}



\newcounter{mnotecount}[section]

\renewcommand{\themnotecount}{\thesection.\arabic{mnotecount}}

\newcommand{\mnote}[1]
{\protect{\stepcounter{mnotecount}}$^{\mbox{\footnotesize $%
\!\!\!\!\!\!\,\bullet$\themnotecount}}$ \marginpar{
\raggedright\tiny\em $\!\!\!\!\!\!\,\bullet$\themnotecount: #1} }

\newtheorem{Theorem}{Theorem}[section]
\newtheorem*{thm}{Theorem}
\newtheorem{Corollary}[Theorem]{Corollary}
\newtheorem{Lemma}[Theorem]{Lemma}
\newtheorem{Proposition}[Theorem]{Proposition}

\newtheorem{Definition}[Theorem]{Definition}

\newtheorem{rem}[Theorem]{Remark}
\numberwithin{equation}{section}

\DeclareMathOperator{\supp}{supp}
\DeclareMathOperator{\Char}{Char}

\newcommand{\R}{\mathbb{R}}
\newcommand{\N}{\mathbb{N}}
\newcommand{\tx}{\tilde{x}}
\newcommand{\ty}{\tilde{y}}
\newcommand{\txx}{{\bf{x}}}
\newcommand{\txixi}{{\boldsymbol{\xi}}}
\newcommand{\txi}{\tilde{\xi}}
\newcommand{\teta}{\tilde{\eta}}
\newcommand{\tep}{\tilde{\epsilon}}

\newcommand{\ang}[1]{\langle#1\rangle}

\newcommand{\Dp}[1]{{\cal D}'(#1)}

\usepackage{caption}
\captionsetup{labelsep=period}

\tikzset{
  symbol/.style={
    draw=none,
    every to/.append style={
      edge node={node [sloped, allow upside down, auto=false]{$#1$}}}
  }
}




\setlength\parindent{0pt}
\parskip.5ex

\title{The Sobolev Wavefront Set of the Causal Propagator in Finite Regularity}

\author{Yafet E. Sanchez Sanchez \and Elmar Schrohe} 

\newcommand{\Addresses}{{
  \bigskip
  \footnotesize

 Y. Sanchez Sanchez , \textsc{INFN Sezione di Genova, Via Dodecaneso 33, 16146 Genova, Italy}\par\nopagebreak
  \textit{E-mail address}: \texttt{yafet.erasmo.sanchez.sanchez@edu.unige.it}\par\nopagebreak
  \textsc{Leibniz University Hannover, Institute of Analysis,  Welfengarten 1, 30167 Hannover, Germany}\par\nopagebreak
 \textit{E-mail address}: \texttt{yess@math.uni-hannover.de}

  \medskip
E. Schrohe, \textsc{Leibniz University Hannover, Institute of Analysis,  Welfengarten 1, 30167 Hannover, Germany}\par\nopagebreak
  \textit{E-mail address}: \texttt{schrohe@math.uni-hannover.de}

}}



\begin{document}
\maketitle
\begin{abstract}
 Given a globally hyperbolic spacetime $M=\R\times \Sigma$ of dimension four and regularity $C^\tau$,
{
 we estimate the Sobolev wavefront set of the causal propagator $K_G$ of the Klein-Gordon operator. In the smooth case, the propagator satisfies $WF'(K_G)=C$, where $C\subset T^*(M\times M)$ { consists of those points $(\tx,\txi,\ty,\teta)$ such that $\txi,\tilde{\eta}$ are cotangent to a null geodesic $\gamma$ at $\tx$ 
resp. $\ty$ and parallel transports of each other along $\gamma$.}

We show that for  $\tau>2$,  $$WF'^{-2+\tau-{\epsilon}}(K_G)\subset C$$ for
every ${\epsilon}>0$. Furthermore, in regularity $C^{\tau+2}$ with $\tau>2$, 
    $$C\subset WF'^{-\frac{1}{2}}(K_G)\subset WF'^{\tau-\epsilon}(K_G)\subset C$$
holds for $0<\epsilon<\tau+\frac{1}{2}$.

In the ultrastatic case with $\Sigma$ compact, we show $WF'^{-\frac{3}{2}+\tau-\epsilon}(K_G)\subset C$ for $\epsilon >0$ and $\tau>2$ and  $WF'^{-\frac{3}{2}+\tau-\epsilon}(K_G)= C$ for $\tau>3$ and $\epsilon<\tau-3$. Moreover, we  show that the global regularity of the propagator $K_G$ is $H^{-\frac{1}{2}-\epsilon}_{loc}(M\times M)$ as  in the smooth case. 
 }
\end{abstract}
\tableofcontents

Note: Theorem 3.4, Proposition 4.1, Corollary 4.4, Lemma 5.1 and Theorem 7.1 from Version 1 are now Theorem 3.3,  Proposition 6.7, Corollary 6.10, Lemma 4.1  and Theorem 6.19
respectively.

------------------------------------------------------------------------

\section{Introduction}
{
The quantisation of the scalar field forms part of the basis for the subject of Algebraic Quantum Field Theory. While the main mathematical framework for the smooth setting was initiated more than 20 years ago, see e.g. \cite{haag, fulling, kaywald, rad, dim}, ongoing research continues to develop new techniques, particularly in connection with microlocal analysis \cite{junker, adiabatic, wrochna}, the importance of Hadamard states \cite{fewster, moretti, stro, ko, kav}, locality and covariance \cite{brun1, longo, few2}, perturbation theory  \cite{hollands, brun, buch}, Dirac fields \cite{holdir, gerdir, capo, drago}, and gauge theory \cite{ben, buch2}. 

Moreover, it is now possible to approach certain mathematical questions related to quantum fields propagating in spacetimes of finite regularity. This is motivated by the deep foundational work on causality theory \cite{vien, sanchez, minguzzi, kunz} and advances in our understanding of nonlinear hyperbolic equations \cite{cr, dafermos, l2}, which were needed as a first step towards a full understanding of Einstein's equations as a well-posed Cauchy problem, which requires solutions that go beyond the smooth ones. Additionally, there are several astrophysical models of phenomena such as neutron stars, self-gravitating fluids, and gravitational collapse that are not smooth \cite{adler,fluids,stars}.
}

The quantisation  proceeds in two steps. First, one constructs an algebra of observables, then  one represents this algebra on a Hilbert space of physical states.

A common candidate for such physical quantum states,  $\omega$,  are quasifree states that satisfy the microlocal spectrum condition.

To state it, it is useful to introduce the sets
\vspace{-.2cm}
\begin{eqnarray}\label{C}
  &C=\big\{(\tx,\txi,\ty,\tilde{\eta})\in T^{*}(M\times M)\backslash0;  
  g^{ab}(\tx)\txi_{a}{\txi}_{b}=g^{ab}(\ty)\tilde{\eta}_{a}\tilde{\eta}_{b}=0, (\tx,\txi)\sim(\ty,\tilde{\eta})\big\} \\
  &C^{+}=\left\{(\tx,\txi,\ty,\tilde{\eta})\in C; \txi^{0}\ge0,\tilde{\eta}^{0}\ge0\right\}\nonumber,
  \end{eqnarray} 
\noindent
where $(\tx,\txi)\sim(\ty,\tilde{\eta})$ means that there {is a null geodesic $\gamma$ joining $\tx$ and $\ty$ such that  $\txi,\tilde{\eta}$ are cotangent to the null geodesic $\gamma$ at $\tx$ 
resp. $\ty$ and parallel transports of each other.}
 
 Using the above sets one can define the microlocal spectrum condition as follows:
 \begin{Definition}
 A quasifree state $\omega_{H}$ on the algebra of observables satisfies the microlocal spectrum condition if its two point function $\omega^{(2)}_{H}$ is a distribution in $\mathcal{D}'(M\times M)$ and satisfies the following wavefront set condition
 
 $$WF'(\omega^{(2)}_{H})=C^{+},$$
 \noindent

where $WF'(\omega^{(2)}_{H}):= \{(\tx, \txi; \ty, -\tilde{\eta}) \in T^{*}(M\times M); (\tx, \txi; \ty, \tilde{\eta}) \in WF(\omega^{(2)}_{H})\}.$
 
 \end{Definition}

 
{These states, called Hadamard states, 
have been constructed in the smooth setting. They encompass both ground and KMS states \cite{junker, wrochna}. Moreover, they are particularly well-suited for point-splitting renormalisation, a technique used for calculating key physical quantities like the renormalised energy-momentum tensor \cite{waldten, wald}.
}

{
A central goal now is the construction of suitable quantum states in non-smooth scenarios following the techniques in \cite{adiabatic, wrochna}, which requires a thorough knowledge of the wavefront set of the causal propagator. This is the question we address in this article. To be precise, we characterise the wavefront set of the causal propagator of the Klein-Gordon operator in non-smooth globally hyperbolic spacetimes.}
The causal propagator is constructed using the inverses associated with the Cauchy problem, which makes it a classical propagator.  It is worth noting that there exist other bisolutions such  as the two-point functions described above, which are non-classical (see \cite{der} for further details on this convention).

The microlocal analysis of the  propagators of the wave equation and its parametrices  in low regularity spacetimes introduces several technical challenges due to the lack of a complete theory of Fourier Integral Operators with non-smooth symbols and amplitudes.  However, progress has been made using the paradifferential calculus introduced by Bony \cite{bony} (see also \cite{beals,taylor,mar}).
In addition,  Szeftel has constructed  a  parametrix which requires only control over the $L^2$ curvature of the metric  in order to prove the $L^2$-curvature conjecture related to Einstein's field equations \cite{szeftelparametrix,l2}. Moreover, Tataru \cite{tataru} has constructed parametrices  of the wave equations  in low regularity for metrics with  $ C^{1,1}$  coefficients as a preliminary step to show suitable Strichartz estimates and analyse non-linear PDE's using phase space transforms. In addition, his results allowed even lower regularity at the expense of showing weaker results. Finally,  we mention Smith's construction of  parametrices for the $C^{1,1}$  case using wave packets \cite{smith} (see \cite{waters} for a parametrix construction using Gaussians).
The contribution of our  paper is establishing the microlocal singular structure of the causal propagator when the  regularity of the spacetime is finite. 
{
The main theorems we prove are:

\begin{thm}(Theorem \ref{mains})
Let $(M,g)$ be a $C^{\tau}$ globally hyperbolic spacetime with $\tau>2$ and $K_G$ the causal propagator of the Klein-Gordon operator $P$. Then $$WF'^{-2+\tau-{\epsilon}}(K_G)\subset C$$ for
every ${\epsilon}>0$, $C$ as in Eq.\eqref{C}, 
\end{thm}
and
\begin{thm} (Theorem \ref{mainmain})
For a $C^{\tau+2}$ globally hyperbolic spacetime with $\tau>2$, 
    $$C\subset WF'^{-\frac{1}{2}}(K_G)\subset WF'^{\tau-\epsilon}(K_G)\subset C,$$
and hence equality, holds for $0<\epsilon<\tau+\frac{1}{2}$.
\end{thm}

In the ultrastatic case, sharper results are available. For completeness, we state these in the Appendix, see Lemma 6.5, Lemma 6.7, Theorem 6.9 and Theorem 6.11.

}

\subsection{The Smooth Setting}

Consider a pair  $(M,g)$, where $M$ is a smooth manifold  and $g$ is a  
smooth Lorentzian metric. The  Klein-Gordon operator $P$ on $(M,g)$ is given by
\begin{equation} \label{wave2}
P:=g^{\mu\nu}\nabla_{\mu}\nabla_{\nu}\phi+m^{2}\phi=(\square_g+m^2)\phi
\end{equation}
\noindent
where $g^{\mu\nu}$ is the inverse metric tensor, $\nabla_{\mu}$ is the covariant derivative and $m$ is a positive real number.

The starting point is the notion of advanced and retarded Green operators in this situation.

\begin{Definition}
\label{propagators}
  Let $M$ be a time-oriented connected Lorentzian manifold and let $P$
  be the Klein-Gordon operator. An \emph{advanced Green operator}
  $G^+$ is a linear map $G^+:{\mathcal D}(M) \to C^\infty(M)$ such
  that
\begin{enumerate}\renewcommand{\labelenumi}{\rm \arabic{enumi}.}
\item $P \circ G^+ ={\rm id}_{{\mathcal D}(M)}$
\item $G^+ \circ P|_{{\mathcal D}(M)} ={\rm id}_{{\mathcal D}(M)}$
\item ${\supp}(G^+\phi) \subset J^+({\supp}(\phi))$ 
for all $\phi \in {\mathcal D}(M)$.
\end{enumerate}
A \emph{retarded Green operator} $G^-$ satisfies $(1)$ and $(2)$, but $(3)$
is replaced by the condition  ${\supp}(G^-\phi) \subset J^-({\supp}(\phi))$ for all $\phi
\in {\mathcal D}(M)$.
\end{Definition}
In \cite[Corollary 3.4.3]{bgp} it is shown that these exist and
are unique on a globally hyperbolic manifold.

The advanced and retarded Green operators are then used to define the
causal propagator $$G:=G^+-G^-$$ which maps ${{\mathcal D}(M)}$ to $C^\infty_{{\rm sc}}(M)$, the space
of spatially compact maps , i.e.\ the smooth
maps $\phi$ such that there exists a compact subset $K \subset M$ with
$\supp(\phi) \subset J(K)$. If $M$ is globally hyperbolic then one has
the following \emph{exact sequence} \cite[Theorem 3.4.7]{bgp}:
\begin{equation*}
\begin{tikzcd}
0 \arrow{r}  &{\mathcal D}(M) \arrow{r}{P} &{\mathcal D}(M) 
\arrow{r}{G} &C^\infty_{{\rm sc}}(M) \arrow{r}{P} &C^\infty_{{\rm sc}}(M),
\end{tikzcd}
\end{equation*}
Since $G$ is a continuous linear operator, 
the Schwartz Kernel Theorem implies that there exists one and only one distribution ${\displaystyle K_G\in {\mathcal {D}}'(M\times M)}$ such that 
\begin{equation}
K_G(u\otimes v)=\ang{G(v),u}, \quad u,v\in \mathcal{D}(M).
\end{equation}
%
It follows from Duistermaat and H{\"o}rmander's characterisation using Fourier Integral Operators that the kernel $K_G$ satisfies 
\begin{equation}
WF'(K_G)=C.
\end{equation}
 
More explicitly, they showed that $K_G\in I^{-\frac{3}{2}}(M\times M, C')$, where $I^{\mu}(X,\Lambda)$  denotes the space of Lagrangian distributions of order $\mu$ over the manifold $X$ associated to the Lagrangian submanifold $\Lambda$. In this case $\Lambda=C'=\{(\tx, \txi; \ty, -\tilde{\eta}) ;(\tx, \txi; \ty, \tilde{\eta}) \in C\}$, see \cite[Theorem 6.5.3]{duistermaat}.{  Using \cite[Theorem 5.4.1, Theorem 6.5.3]{duistermaat}  one obtains that in four dimensions, {$K_G$ belongs to the Sobolev space $H_{loc}^{-\frac{1}{2}-\epsilon}(M\times M)$ for any $\epsilon>0$.} {For details on the Sobolev spaces mentioned, see Section \ref{AppendixA} and \cite[Appendix B]{green}. }

\section{The Non-smooth Setting}

Next  we will consider the case, where $g$ {is a non-smooth metric. We will specify the precise regularity in each section. }

The definition of the Green operators in the non-smooth setting will
require us to choose suitable spaces of functions based on Sobolev spaces as domain and range. We let 
\begin{align}
V_0&= \{\phi\in H_{{comp}}^{2}(M);  
P\phi\in  H^{1}_{{comp}}(M) \} 
\nonumber\\
V_{sc}&= \{\phi\in  H_{{loc}}^{2}(M); 
P\phi\in H^{1}_{{loc}}(M) \\
&\text{ and } \text{supp}(\phi) \subset J(K), \text{ where }K \text{ is a compact subset of } M\}.\nonumber
\end{align}

\begin{Definition} \label{Green} 
An \emph{advanced  Green operator} for the Klein-Gordon operator $P$ is a
linear map $$G^{+}:H_{\text{comp}}^{1}(M)\rightarrow H_{{loc}}^{2}(M)$$ satisfying the  properties
 \begin{enumerate}\renewcommand{\labelenumi}{\rm \arabic{enumi}.}
   \item $PG^{+}=\text{id}_{H_{{comp}}^{1}(M)}$, 
   \item $ G^{+}P|_{V_0}=\text{id}_{V_0}$,
   \item $\supp(G^{+}(f))\subset J^{+}(\supp(f))$ for all $f\in H_{{comp}}^{1}(M)$,
 \end{enumerate}
 A \emph{retarded 
 Green operator} $G^{-}$  is defined correspondingly.
\end{Definition}

It was shown in \cite[Theorem 5.8]{green} that these  operators exist and are unique on Lorentzian manifolds that satisfy the condition of generalised hyperbolicity. This condition is satisfied in particular for $C^{1,1}$ globally hyperbolic spacetimes.
Moreover, one obtains a short exact sequence for  the low-regularity causal propagator, $G:=G^+-G^-$,  similar to  that in the smooth case 
\begin{center}
\begin{tikzcd}
    0\arrow{r}& V_0 \arrow{r}{P}& {H_{{comp}}^{1}(M)}\arrow{r}{G} &V_{\text{sc}} \arrow{r}{P} & H^{1}_{{loc}} (M).
   \end{tikzcd}
\end{center}

\section{Pseudodifferential Operators with Non-smooth  Symbols}

\subsection{Symbol Classes}

Let $\{\psi_j; j=0,1,\ldots\}$ be a Littlewood-Paley partition of unity on 
$\mathbb R^n$, i.e., a partition of unity $1=\sum_{j=0}^\infty\psi_j$, 
where $\psi_0\equiv 1$ for $|\xi|\le 1$ and $\psi_0\equiv 0$ for $|\xi|\ge2$ and 
$\psi_j(\xi) =  \psi_0(2^j\xi)-\psi_0(2^{1-j}\xi)$. 
The support of $\psi_j$, $j\ge1$, then lies in an annulus around the origin of interior radius 
$2^j$ and exterior radius $2^{1+j}$.  

\begin{Definition}\label{Holder}
{\rm (a)} 
For $\tau\in (0,\infty)$, the H{\"o}lder space $C^\tau(\mathbb{R}^n)$ is the set of all functions $f$ with
\begin{equation}
\|f\|_{C^\tau}:=\displaystyle\sum_{|\alpha|\le [\tau]}\|\partial^\alpha_{x}f\|_{L^\infty(\mathbb{R}^n)}+\displaystyle\sum_{|\alpha|= [\tau]}\sup_{x\neq y}\frac{|\partial^\alpha_{x}f(x)-\partial^\alpha_{x}f(y)|}{|x-y|^{\tau-[\tau]}}<\infty.
\end{equation}

{\rm (b)} For $\tau\in \mathbb{R}$ the Zygmund space 
$C^\tau_{*}(\mathbb{R}^n)$ consists of all functions $f$ with 
\begin{equation}
\|f\|_{C^\tau_*}=\sup_j 2^{j\tau}\|\psi_j (D) f\|_{L^\infty}<\infty.
\end{equation}
\end{Definition}
Here $\psi_j(D)$ is the Fourier multiplier with symbol $\psi_j$, i.e., 
$\psi_j(D)u = \mathcal F^{-1}\psi_j\mathcal F u$, 
where $(\mathcal Fu)(\xi) = (2\pi)^{-n/2} \int e^{-ix\xi} u(x)\, d^nx$ is the Fourier transform. 

We have the following relations: $C^\tau=C^\tau_{*}$ if $\tau\notin \mathbb{N}$, and $C^\tau\subset C^\tau_{*}$ if $\tau\in \mathbb{N}$.

We next introduce  symbol classes 
of finite H\"older or Zygmund regularity{,} following Taylor 
\cite{taylor}. We use the notation $\langle\xi\rangle:=(1+|\xi|^2)^{\frac{1}{2}}$, $\xi\in \mathbb R^n$.

\begin{Definition}{\rm (a)} Let $0\le \delta <1$.
A symbol $p(x,\xi)$ belongs to $C^\tau_* S^{m}_{1,\delta}:=C^\tau_*S^m_{1,\delta}(\R^n\times \R^n)$ if  $$\|D^{\alpha}_{\xi}p(\cdot,\xi)\|_{C_*^{\tau}}\le C_\alpha\langle\xi\rangle^{m-|\alpha|+\tau\delta} \text{ and } |D^{\alpha}_{\xi}p(x,\xi)|\le C_\alpha\langle\xi\rangle^{m-|\alpha|}.$$

{\rm (b)}  We obtain the symbol class $C^\tau S^{m}_{1,\delta}:=C^\tau S^m_{1,\delta}(\R^n\times \R^n)$ for $\tau>0$ by requiring that
$$\|D^{\alpha}_{\xi}p(\cdot,\xi)\|_{C^{s}}\le C_\alpha\langle\xi\rangle^{m-|\alpha|+s\delta}, \quad0\le s\le \tau$$.

{\rm (c)} A symbol $p(x,\xi)$ is in  $C^{\tau}S_{cl}^{m}$ provided $p(x,\xi)\in C^{\tau}S^{m}_{1,0}$ and $p(x,\xi)$ has a classical expansion 
$$p(x,\xi)\sim \sum_{j\ge0}p_{m-j}(x,\xi)$$ 
in terms $p_{m-j}$ homogeneous of degree $m-j$ in $\xi$ for $|\xi|\ge 1$, in the sense that the difference between $p(x,\xi)$ and the sum over $0\le j< N$ belongs to $C^{\tau}S^{m-N}_{1,0}$.

\end{Definition}


The pseudodifferential operator $p(x,D_x)$ with the symbol $p(x,\xi)\in C^\tau S^m_{1,\delta} $ is given by 
\begin{equation}
\left(p(x,D_x)u\right)(x)=(2\pi)^{-n/2} \int_{\mathbb{R}^n}e^{ix\cdot\xi}p(x,\xi)({\cal{F}}{u})(\xi)d^n\xi,
\quad u\in \mathcal S(\mathbb R^n). 
\end{equation}
It extends to continuous maps 
\begin{equation}\label{continuous}
p(x,D_x): H^{s+m}(\mathbb{R}^n)\rightarrow H^{s}(\mathbb{R}^n) , \quad -\tau(1-\delta)<s<\tau.
\end{equation}

{
While it is possible to extend the theory of pseudodifferential operators with non-smooth symbols to manifolds (see \cite{carolina}), due to the local nature of our results it is a key point of this article that we can work entirely on $\R^n$.}

\subsection{Symbol Smoothing} Given $p(x,\xi)\in C^{\tau}S_{1,\gamma}^{m}$ and $\delta\in (\gamma,1)$ let 
\begin{equation}
p^{\#}(x,\xi)=\displaystyle\sum_{j=0}^{\infty}J_{\epsilon_{j}}p(x,\xi)\psi_j(\xi).
\end{equation}
Here $J_\epsilon$ is the smoothing operator given by 
$(J_\epsilon f)(x)=(\phi(\epsilon D)f)(x)$ with $\phi\in C^{\infty}_0(\mathbb{R}^n)$, $\phi(\xi)=1$ for $|\xi|\le 1$, and we take $\epsilon_j=2^{-j(\delta-\gamma)}$.

Letting $p^{b}(x,\xi)=p(x,\xi)-p^{\#}(x,\xi)$ we obtain the decomposition
\begin{equation}\label{smoothing}
p(x,\xi)=p^{\#}(x,\xi)+p^{b}(x,\xi),
\end{equation}
where $p^{\#}(x,\xi)\in S^{m}_{1,\delta}$ and $p^{b}(x,\xi)\in C^{\tau}S^{m-\tau(\delta-\gamma)}_{1,\delta}$. 

{The symbol estimates for $p^\#$ are a consequence of the estimate $$\|\partial_x^\beta J_\epsilon f\|_{L^\infty}\le \begin{cases}
    C\|f\|_{C^\tau} \quad |\beta|\le \tau\\
    C\epsilon^{-(|\beta|-\tau)}\|f\|_{C^\tau} \quad |\beta|> \tau,
\end{cases}$$

and that $\epsilon_j=2^{-j(\delta-\gamma)}$. For details see Proposition 1.3 E and Equation (1.3.21) in \cite{taylor}.}

\subsection{Microlocal Sobolev Regularity}

Let $p\in C^\tau S^{m}_{\rho,\delta}$, $\tau>0$,  with $\delta<\rho$. Suppose that there is a conic neighborhood $\Gamma$ of $(x_0,\xi_0)$ and constants $c,C>0$ such that 
$|p(x,\xi)|\ge c|\xi|^m$ for $(x,\xi) \in \Gamma$, $|\xi|\ge C$. Then $(x_0,\xi_0)$ is called {\em non-characteristic}.  If $p$ has a homogeneous principal symbol $p_m$, the condition is equivalent to $p_m(x_0,\xi_0)\neq 0$. The complement of the set of non-characteristic points  is  the set of characteristic points denoted by ${\Char}(p)$.

A distribution $u$ is {\em microlocally in} $H^{s}$ at $(x_0,\xi_0)\in T^*M\backslash 0$ if 
there exists a conic neighbourhood ${\Gamma_0}$ of $\xi_0$ and a smooth function $\varphi\in C_{0}^{\infty}(M)$ with $\varphi (x_0)\neq 0$ 
such that $$\int_{{\Gamma_0}}\ang{\xi}^{2s}|{\cal{F}}(\varphi u)(\xi)|^{2}d^{n}\xi <\infty.$$
Otherwise we say that $(x_0,\xi_0)$ lies in the $H^s$-wavefront set $WF^s(u)$.  

If $u$ is microlocally in $H^{s}$ in an open conic subset ${\Gamma}\subset T^*M\backslash 0$ we write $u\in H^s_{mcl}({\Gamma})$.


\subsection{Propagation of Singularities for Bisolutions of the Klein-Gordon Operator}

A globally hyperbolic spacetime is of the form $\mathbb R\times \Sigma$, {where $\Sigma$  is not assumed to be compact,} and we will write local coordinates in the form  
\begin{eqnarray}\label{xcoord}
\tx = (t,x), \ty=(s,y)
\end{eqnarray}
and the associated covariables as 
\begin{eqnarray}\label{xicoord}
\txi = (\xi^0,\xi), \teta= (\eta^{0},\eta). 
\end{eqnarray}
On the product $(\mathbb R\times \Sigma)\times( \mathbb R\times \Sigma)$ we use $(\txx,\txixi)$ with 
\begin{eqnarray}\label{doublecoord}
\txx= (\tx,\ty), \txixi=(\txi,\teta).
\end{eqnarray}

In the sequel we shall apply the Klein-Gordon operator also to functions and distributions on $M\times M$. Using the coordinates in Eqs. \eqref{xcoord},\eqref{xicoord} and \eqref{doublecoord}, we distinguish  the cases, where $P$ acts on the first set of variables $(t, x)$  or on the second {set} $(s, y)$, and write $P_{(t,x)}$ and $P_{(s,y)}$, respectively. 
Explicitly,

\begin{eqnarray}
\nonumber
P_{(t,x)}(\txx,D_{\txx})&=&P_{(t,x)}(\tx,D_{\tx},\ty,D_{\ty})=(\square_{g(\tx)}+m^2)\otimes I\\ 
\label{pr0}\nonumber
P_{(s,y)}(\txx,D_{\txx})&=&P_{(s,y)}(\tx,D_{\tx},\ty,D_{\ty})=I\otimes(\square_{g(\ty)}+m^2)
\end{eqnarray}

In particular, 
\begin{eqnarray}
\label{char5}
  \Char(P_{(t,x)})&=&\Char(P)\times T^{*}M\cup\{(\txx,\txixi)\in T^{*}(M\times M)\backslash0, \txi=0\}\\
\Char(P_{(s,y)})&=&T^{*}M\times \Char (P)\cup\{(\txx,\txixi)\in T^{*}(M\times M)\backslash0, \teta=0\}.\nonumber
\end{eqnarray}
%
%

{
\begin{Theorem}\label{elliptic}Let the metric $g$ be of class $C^\tau$, $\tau>1$,
$0\le \sigma<\tau-1$ and $v\in H_{loc}^{2+\sigma-\tau+\epsilon}(M\times M)$ for some ${\epsilon}>0$ with 
 $P_{(t,x)}(\txx,D_\txx){v}=0$. Then 
$$WF^{\sigma+2}({v})\subset \Char(P_{(t,x)}).$$
\end{Theorem}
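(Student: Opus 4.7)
The plan is to prove this microlocal elliptic regularity statement by constructing a rough parametrix for $P_{(t,x)}$ away from its characteristic set and iterating a Sobolev bootstrap argument. Fix $(\txx_0, \txixi_0) \notin \Char(P_{(t,x)})$; I want to show that $v$ is microlocally in $H^{\sigma+2}$ at that point.

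First I would apply symbol smoothing to $P_{(t,x)}$. By the Remark, its full symbol lies in $C^{\tau-1} S^2_{cl}$, so for a chosen $\gamma \in (0,1)$ the decomposition \eqref{smoothing} produces
\[
P_{(t,x)} = P^\#_{(t,x)} + P^b_{(t,x)},
\]
where $P^\#_{(t,x)}$ has a smooth symbol in $S^2_{1,\gamma}$ and $P^b_{(t,x)} \in \mathrm{Op}\,C^{\tau-1}S^{2-(\tau-1)\gamma}_{1,\gamma}$. Since symbol smoothing preserves principal symbols, $P^\#_{(t,x)}$ is still elliptic at $(\txx_0, \txixi_0)$. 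The standard smooth parametrix construction then yields $Q \in \mathrm{Op}\,S^{-2}_{1,\gamma}$, a microlocal cutoff $E$ equal to the identity near $(\txx_0, \txixi_0)$, and a smoothing remainder $R$ satisfying $Q P^\#_{(t,x)} = E + R$. Combining this with the hypothesis $P_{(t,x)} v = 0$ gives the key identity
\[
E v = - R v - Q P^b_{(t,x)} v.
\]

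Next I would bootstrap Sobolev regularity using this identity. The operator $Q$ is of order $-2$ and type $(1,\gamma)$, hence acts continuously on every Sobolev space, while the continuity statement quoted after Definition 3.2, applied to $P^b_{(t,x)}$, shows that $Q P^b_{(t,x)}$ has effective order $-(\tau-1)\gamma$ and maps $H^{s+(\tau-1)\gamma}_{loc}$ into $H^s_{loc}$ for $s$ in an admissible range depending on $\gamma$ and $\tau$. Starting from $v \in H^{2+\sigma-\tau+\epsilon}_{loc}$, the identity promotes $E v$ to $H^{2+\sigma-\tau+\epsilon+(\tau-1)\gamma}_{loc}$, and since $E$ is microlocally the identity, this says that $v$ is microlocally in that Sobolev space in a slightly smaller conic neighborhood of $(\txx_0, \txixi_0)$. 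Re-running the same argument with tightened cutoffs upgrades the Sobolev index by $(\tau-1)\gamma$ at each stage; after finitely many iterations the index reaches $\sigma + 2$, since the total gain required is $\tau - \epsilon$.

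The main obstacle is ensuring that every intermediate Sobolev index stays inside the continuity range of the rough operator $P^b_{(t,x)}$ throughout the iteration, while the final index reaches exactly $\sigma + 2$. The hypotheses $\sigma < \tau - 1$ and $\epsilon > 0$ are precisely what allows a choice of $\gamma$ sufficiently close to $1$ so that the starting index $2+\sigma-\tau+\epsilon$ lies in the admissible range, each intermediate index remains admissible, and the finite bootstrap terminates at $\sigma + 2$. Since $(\txx_0, \txixi_0) \notin \Char(P_{(t,x)})$ was arbitrary, this yields $WF^{\sigma+2}(v) \subset \Char(P_{(t,x)})$.
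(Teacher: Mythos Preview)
Your outline has a concrete gap at the very first bootstrap step. Treating $P_{(t,x)}$ as an element of $C^{\tau-1}S^2_{1,0}$ and smoothing it in one piece gives $P^b_{(t,x)}\in C^{\tau-1}S^{2-(\tau-1)\gamma}_{1,\gamma}$, whose Sobolev continuity $H^{s+2-(\tau-1)\gamma}\to H^s$ holds only for $-(\tau-1)(1-\gamma)<s<\tau-1$. Starting from $v\in H^{2+\sigma-\tau+\epsilon}_{loc}$ the target index is $s=\sigma-\tau+\epsilon+(\tau-1)\gamma$, and the lower bound $s>-(\tau-1)(1-\gamma)$ reduces, after cancelling $(\tau-1)\gamma$ on both sides, to the $\gamma$-independent constraint $\sigma+\epsilon>1$. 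Thus for small $\sigma$ (the theorem allows $\sigma=0$) no choice of $\gamma$ places the first step in the admissible range, and the iteration never gets off the ground.

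The paper avoids this by splitting the symbol by homogeneity \emph{before} smoothing: the principal part $p_2=-\xi_0^2+h^{ij}(x)\xi_i\xi_j$ has genuine $C^\tau$ coefficients, while only the first-order part $p_1$ drops to $C^{\tau-1}$. Smoothing each separately yields $p_2^b\in C^\tau S^{2-\tau\delta}_{1,\delta}$ (order reduced by $\tau\delta$, not merely $(\tau-1)\delta$) and $p_1^b\in C^{\tau-1}S^{1-(\tau-1)\delta}_{1,\delta}$. Choosing $\delta<1$ close enough to $1$ that $\tau\delta>\tau-\epsilon$, one has $v\in H^{2+\sigma-\tau\delta}_{loc}$ and then $p_2^bv\in H^\sigma_{loc}$, $p_1^bv\in H^{\sigma+1-\delta}_{loc}$ in a single application, with both target indices in the admissible ranges for every $\sigma\in[0,\tau-1)$. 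The smooth parametrix of $q^\#=p_2^\#+p_1^\#+p_0$ at the non-characteristic point then gives $v\in H^{\sigma+2}_{mcl}$ directly---no iteration is needed. This is precisely the point of Remark~\ref{propagation1}.
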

}
\begin{proof}
Being interested in the wavefront set  of $v$ near a point $\txx$ we multiply $v$ by a function $\varphi\in \mathcal D(M\times M)$ with $\varphi \equiv 1$ near $\txx$ and consider $\varphi v$. So we can assume that $v$ has support in a small neighbourhood of $\txx$ contained in a single coordinate patch and consider $v$ as an element of $H^{2+\sigma-\tau+\epsilon}(\R^4\times \R^4)$. In order to distinguish points $(\txx,\txixi)=(\tx,\txi,\ty,\teta)$ from their representation in local coordinates, we will write the latter in the form $(\uline{\txx},\uline{\txixi})=(\uline{\tx}, \uline{\txi}, \uline{\ty},\uline{\teta})$. In this local setting, $P_{(t,x)}(\uline{\txx},D_{\uline{\txx}})$ is given by the symbol
\begin{equation}\label{p0}
P_{(t,x)}(\uline{\txx},\uline{\txixi})=P_{(t,x)}(\uline{\tx},\uline{\txi},\uline{\ty},\uline{\teta})=\underbrace{g^{\mu\nu}(\uline{\tx})\uline{\xi_\mu}\uline{\xi_\nu}}_{p_2(\uline{\txx},\uline{\txixi})}+\underbrace{ig^{\mu\nu}(\uline{x})\Gamma^\rho_{\mu\nu}(\uline{x})\uline{\xi}_\rho}_{p_1(\uline{\txx},\uline{\txixi})}+\underbrace{m^2}_{p_0(\uline{\txx},\uline{\txixi})}.
\end{equation}

The symbol smoothing (Eq.(\ref{smoothing})) on $p_2,p_1$ gives a decomposition 
\begin{align*}
p_2(\uline{\txx},\uline{\txixi})&=p_2^{\#}(\uline{\txx},\uline{\txixi})+p_2^{b}(\uline{\txx},\uline{\txixi})\\
p_1(\uline{\txx},\uline{\txixi})&=p_1^{\#}(\uline{\txx},\uline{\txixi})+p_1^{b}(\uline{\txx},\uline{\txixi})\\
P_{(t,x)}(\uline{\txx},\uline{\txixi})&=(p_2^{\#}(\uline{\txx},\uline{\txixi})+p_1^{\#}(\uline{\txx},\uline{\txixi}))+p_2^{b}(\uline{\txx},\uline{\txixi})+p_1^{b}(\uline{\txx},\uline{\txixi})+p_0(\uline{\txx},\uline{\txixi})\nonumber \\
&=q^{\#}(\uline{\txx},\uline{\txixi})+p_2^{b}(\uline{\txx},\uline{\txixi})+p_1^{b}(\uline{\txx},\uline{\txixi}),
\end{align*} 
where 
\begin{equation}\label{sharp}
q^{\#}(\uline{\txx},\uline{\txixi})=(p_2^{\#}(\uline{\txx},\uline{\txixi})+p_1^{\#}(\uline{\txx},\uline{\txixi})+p_0(\uline{\txx},\uline{\txixi}))\in S^{2}_{1,\delta}(\R^8\times\R^8),
\end{equation}
\begin{equation}
 p_2^{b}(\uline{\txx},\uline{\txixi})\in C^{\tau}S^{2-\tau\delta}_{1,\delta}(\R^8\times\R^8) \quad p_1^{b}(\uline{\txx},\uline{\txixi})\in C^{\tau-1}S^{1-(\tau-1)\delta}_{1,\delta}(\R^8\times\R^8).
\end{equation}

Taking  $0\le \delta< 1$ so close to $1$ that $2-\tau\delta<2-\tau+\epsilon$ we have 
${v}\in H^{2+\sigma-\tau\delta}(\R^4\times \R^4)$ (notice this implies ${v}\in H^{1+\sigma-(\tau-1)\delta}(\R^4\times \R^4)$), and  we have 
\begin{equation}
q^{\#}(\uline{\txx},D_{\uline{\txx}})v=-(p_2^{b}(\uline{\txx},D_{\uline{\txx}})+p_1^{b}(\uline{\txx},D_{\uline{\txx}}))v=f,
\end{equation}
where $f\in H^{\sigma}(\R^4\times \R^4)$, since 
$p_2^b(\uline{\txx},D_{\uline{\txx}}) v\in  H^{\sigma}(\R^4\times \R^4)$ and 
$p_1^b(\uline{\txx},D_{\uline{\txx}}) v\in  H^{\sigma+1-\delta}(\R^4\times \R^4)$.

Now if  $(\uline{\tx_0},\uline{\txi_0},\uline{\ty_0},\uline{\teta_0})\notin \Char(P_{(t,x)})$  there are $C,c>0$ such that 
\begin{equation*}
|P_{(t,x)}(\uline{\txx},\uline{\txixi})|\ge c |\uline{\txixi}|^2 \text{ for } |\uline{\txixi}|\ge C
\end{equation*} 
in a conical neighbourhood $\Gamma$ that contains $(\uline{\tx_0},\uline{\txi_0},\uline{\ty_0},\uline{\teta_0})$.

Since  $p_2^{b}(\uline{\txx},\uline{\txixi})\in C^{\tau}S^{2-\tau\delta}_{1,\delta}$ and $p_1^{b}(\uline{\txx},\uline{\txixi})\in C^{\tau-1}S^{1-(\tau-1)\delta}_{1,\delta}$ there exists a $\tilde{C}>0$ such that 
\begin{align*}
|q^{\#}(\uline{\txx},\uline{\txixi})|&\ge C (1+|\uline{\txixi}|^2)-(1+|\uline{\txixi}|^2)^{\frac{2-\tau\delta}{2}}-(1+|\uline{\txixi}|^2)^{\frac{1-(\tau-1)\delta}{2}}\\
& \ge \tilde{C} (1+|\uline{\txixi}|^2) \text{ for large } |\uline{\txixi}|.
\end{align*}
Therefore  $(\uline{\tx_0},\uline{\txi_0},\uline{\ty_0},\uline{\teta_0})\notin \Char(q^{\#})$.

Since $q^{\#}\in S^{2}_{1,\delta}$ and  $(\uline{\tx_0},\uline{\txi_0},\uline{\ty_0},\uline{\teta_0})\notin \Char(q^{\#}) $ there is a microlocal parametrix with symbol  $\tilde{q}\in S^{-2}_{1,\delta}(\R^8\times\R^8)$ such that 
\begin{align*}
v+r(\uline{\txx},D_{\uline{\txx}})v&=\tilde{q}(\uline{\txx},D_{\uline{\txx}})q^{\#}(\uline{\txx},D_{\uline{\txx}})v
=\tilde{q}(\uline{\txx},D_{\uline{\txx}})f,
\end{align*}
where $(\uline{\tx_0},\uline{\txi_0},\uline{\ty_0},\uline{\teta_0})\notin WF(r(\uline{\txx},D_{\uline{\txx}})v)$ and 
$\tilde{q}(\uline{\txx},D_{\uline{\txx}})f\in H^{\sigma+2}(\R^4\times\R^4)$ which shows that  $(\uline{\tx_0},\uline{\txi_0},\uline{\ty_0},\uline{\teta_0})\notin WF^{\sigma+2}(\tilde q(\uline{\txx},D_{\uline{\txx}})f)$.
Since
\begin{align}
WF^{\sigma+2}(v)\subset WF^{\sigma+2}(\tilde q(\uline{\txx},D_{\uline{\txx}})f) \cup WF(r(\uline{\txx},D_{\uline{\txx}})v) ,
\end{align}
{we see that} $(\uline{\tx_0},\uline{\txi_0},\uline{\ty_0},\uline{\teta_0)}\notin WF^{\sigma+2}(v)$.

By definition of the wavefront set this means that $(\txx_0,\txixi_0)$ is not in the wavefront set of $v$ considered as a distribution on $M\times M$.
\end{proof}
\begin{rem}\label{local}
    In the proof presented above, we showed that the microlocal results are local estimates, which can be done within a chart in the cotangent bundle $T^*\R^8$. To streamline the discussion and avoid frequently alternating between the notation of the chart and the manifold, we will forego this distinction in Section 5. However, it is important to bear in mind that the proofs in that section are analogous to the one detailed above, involving localization within a chart.
\end{rem}

\begin{rem}\label{propagation1}
Applying the symbol smoothing directly to $P_{(t,x)}\in C^{\tau-1}S^2_{1,0}$ would leave us with $P^b_{(t,x)}\in C^{\tau-1} S^{2-(\tau-1)\delta}_{1,\delta}$. The advantage of the decomposition in Theorem 3.3 with $p_1^b\in C^{\tau-1}S^{1-(\tau-1)\delta}_{1,\delta}$ and $p^b_2\in C^\tau S^{2-\tau\delta}_{1,\delta}$ is that the associated operators map a given $u\in H^{2+s-\tau\delta}$ to $H^s$ and $H^{s+1-\delta}$, respectively, for $-(1-\delta)(\tau-1)<s<\tau-1$, so that the sum is in $H^s$ instead of $H^{s-\delta}$.
\end{rem}
The theorem, below, will be crucial for our main result. Proofs can be  found in 
\cite[Proposition 6.1.D]{taylor} or \cite[Proposition 11.4]{tools}.
In \cite[p.215]{tools}, Taylor points out that Zygmund regularity $C_*^2$ for the metric suffices.
\begin{Theorem}\label{propagation}
Let $u\in \mathcal D'(M\times M)$ solve $P_{(t,x)}u=f$. Let $\gamma$ be an integral curve of the Hamiltonian vector field $H_{p_{2}}$ with $p_2$ as in Eq.\eqref{p0}.
{ If for some $s\in \R$,} we have $f\in H_{mcl}^s({\Gamma})$ and {$P_{(t,x)}^b u\in H_{mcl}^s({ \Gamma)}$}, where $\gamma\subset{\Gamma}$ with ${\Gamma}$ a conical neighbourhood and $u\in H_{mcl}^{s+1}(\gamma(0))$, then $u\in H_{mcl}^{s+1}({\gamma})$. 
\end{Theorem}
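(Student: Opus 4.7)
The plan is to reduce the problem to the setting of the smooth pseudodifferential calculus via the symbol smoothing decomposition already used in Theorem \ref{elliptic}, and then invoke a classical H\"ormander-type propagation of singularities result along bicharacteristics.

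First, as in the proof of Theorem \ref{elliptic}, I would write
\begin{equation*}
P_{(t,x)} = q^{\#}(\txx,D_{\txx}) + p_2^{b}(\txx,D_{\txx}) + p_1^{b}(\txx,D_{\txx}),
\end{equation*}
with $q^{\#}\in S^2_{1,\delta}$ smooth in $\txx$ (for $\delta\in(0,1)$ sufficiently close to $1$), and $p_2^b,p_1^b$ the finite-regularity remainders whose sum is $P_{(t,x)}^b$. Rearranging $P_{(t,x)} u = f$ gives
\begin{equation*}
q^{\#}(\txx,D_{\txx}) u = f - P_{(t,x)}^b u,
\end{equation*}
and by hypothesis both $f$ and $P_{(t,x)}^b u$ lie in $H^{s}_{mcl}({\Gamma})$, so the right-hand side does as well.

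Next, I would observe that the principal symbol of $q^{\#}$ agrees with $p_2$ modulo a symbol of order $2-\tau\delta < 2$, since symbol smoothing preserves the classical principal part. In particular $q^{\#}$ is of real principal type on the characteristic set, with Hamiltonian vector field equal to $H_{p_2}$ up to reparametrization. The standard H\"ormander propagation of singularities, formulated for smooth symbols in $S^2_{1,\delta}$ with $\delta<1$, then applies directly: given $q^{\#} u \in H^{s}_{mcl}({\Gamma})$ along the bicharacteristic $\gamma\subset{\Gamma}$, together with $u\in H^{s+1}_{mcl}$ at $\gamma(0)$, one concludes $u\in H^{s+1}_{mcl}$ on all of $\gamma$, which is the required conclusion.

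The main obstacle is executing the positive-commutator argument in the $(1,\delta)$-calculus rather than the standard $(1,0)$-calculus. Each $x$-derivative of $q^{\#}$ now costs $\delta$ orders in $\xi$, so the commutator $[q^{\#},B]$ with a suitable escape function $B$ is one order lower in a weaker sense than classically: careful bookkeeping of which class every term in the commutator expansion belongs to is needed, and one must verify that the resulting sharp G{\aa}rding-type positivity still yields enough gain to close the microlocal energy estimate along $\gamma$. A secondary concern, flagged in the remark following the theorem statement, is that the Hamiltonian flow of $p_2$ must make sense at the given metric regularity; the observation from Taylor that Zygmund $C_*^2$ regularity of $h$ (or boundedness of the Ricci tensor) is sufficient identifies the natural borderline for this argument and covers both the $C^\tau$, $\tau>2$, and the $C^{1,1}$ cases treated in the paper.
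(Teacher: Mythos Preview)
The paper does not actually supply its own proof of this theorem: it simply states the result and refers to Taylor \cite[Proposition 6.1.D]{taylor} and \cite[Proposition 11.4]{tools} for the argument, noting that $C_*^2$ regularity suffices. Your outline---symbol smoothing to pass to $q^{\#}\in S^2_{1,\delta}$, then a positive-commutator propagation argument for the smooth operator---is exactly the strategy carried out in those references, so there is no discrepancy to report.

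One small sharpening worth making if you flesh this out: the commutator argument in the $(1,\delta)$-calculus does not close on the bare symbol estimates for $q^{\#}\in S^2_{1,\delta}$ alone. What Taylor actually uses is the improved behaviour of $D_x^\beta p^{\#}$ recorded in Eq.~\eqref{morederivatives}, namely that $x$-derivatives of the smoothed symbol stay in $S^m_{1,\delta}$ for $|\beta|\le \tau$ and only lose $\delta(|\beta|-\tau)$ orders beyond that. This is what keeps the subprincipal terms in the commutator expansion under control and allows the sharp G{\aa}rding step to produce a genuine gain. Your ``main obstacle'' paragraph gestures at this, but the precise mechanism is the extra structure in \eqref{morederivatives} rather than generic $(1,\delta)$-calculus bookkeeping.
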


\begin{rem}\label{taylor}
If $u\in H_{comp}^{2+s-\tau\delta}$, then $P_{(t,x)}^b u\in H^s$, see Remark \eqref{propagation1}. 
Moreover, using the divergence structure of the operator one can show that, if $u\in H_{comp}^{1+s-\tau\delta}, f\in H^{s-1}, u\in H_{mcl}^{s}({\gamma}(0))$, then 
$u\in H_{mcl}^{s}({\gamma})$ for $-2(1-\delta)<s\le 2$; see {\rm\cite[p.210]{tools}} for details. 
\end{rem}

\begin{rem}
Notice that the $s\in\mathbb{R}$  is constrained by the microlocal regularity of $P^b_{(t,x)}u$ and not only that of $f$. In fact, one can use the stronger hypothesis that $u\in H^{s-\tau\delta}_{comp} (U)$ for a suitable domain  $U$,  regularity $\tau$ and $\delta\in(0,1)$ in order to guarantee that $P^b_{(t,x)}u\in H^s(U)\subset H^s_{mcl}(\Gamma)$ 
\end{rem}

\section{Support and Global Regularity of $K_G$ }

The following two lemmas contain the main results of this section. The  first lemma shows that only causally connected points belong to the support of $K_G$. The second lemma establishes that $K_G\in H_{loc}^{-1-\epsilon}(M\times M)$. 

{
\begin{Lemma}\label{causal}
Let $(\tx,\ty)\in M\times M$ be such that $\tilde x$ and $\tilde y$ are not causally related, i.e. $\tx\notin J(\ty)$. Then $(\tx,\ty)\notin\supp (K_G)$.
\end{Lemma}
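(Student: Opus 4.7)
The plan is to exploit the support properties of the advanced and retarded Green operators (item 3 of Definition \ref{Green}), together with the fact that the causal relation $J$ is a closed subset of $M\times M$ in a $C^{1,1}$ globally hyperbolic spacetime (this is part of what the authors allude to with the reference to \cite{kunz}).

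First I would pick, using the hypothesis $\tilde x\notin J(\tilde y)$ and the closedness of the causal relation, open relatively compact neighbourhoods $U_{\tilde x}$ of $\tilde x$ and $U_{\tilde y}$ of $\tilde y$ such that $\overline{U_{\tilde x}}\cap J(\overline{U_{\tilde y}})=\emptyset$. Concretely: the set $\{(p,q)\in M\times M: p\in J(q)\}$ is closed, so its complement is open and contains $(\tilde x,\tilde y)$; one then shrinks to obtain a product neighbourhood $U_{\tilde x}\times U_{\tilde y}$ contained in the complement, and one may moreover arrange the closures to be disjoint from $J(\cdot)$ by a further small shrinkage.

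Next I would evaluate $K_G$ on an arbitrary test tensor $u\otimes v$ with $u\in\mathcal D(U_{\tilde x})$ and $v\in\mathcal D(U_{\tilde y})$. Writing
\begin{equation*}
K_G(u\otimes v)=\langle G(v),u\rangle=\langle G^+(v),u\rangle-\langle G^-(v),u\rangle,
\end{equation*}
property (3) in Definition \ref{Green} yields
\begin{equation*}
\supp G^{\pm}(v)\subset J^{\pm}(\supp v)\subset J(\overline{U_{\tilde y}}),
\end{equation*}
which by construction is disjoint from $\supp u\subset \overline{U_{\tilde x}}$. Hence both pairings vanish and $K_G(u\otimes v)=0$.

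Finally, since tensor products $u\otimes v$ with $u\in\mathcal D(U_{\tilde x})$, $v\in\mathcal D(U_{\tilde y})$ span a dense subspace of $\mathcal D(U_{\tilde x}\times U_{\tilde y})$ and $K_G$ is a distribution, $K_G$ vanishes identically on $\mathcal D(U_{\tilde x}\times U_{\tilde y})$; therefore $(\tilde x,\tilde y)\notin\supp(K_G)$. The one non-routine ingredient is the closedness of $J$ in the low-regularity setting, which I expect to be the main thing to cite carefully, since in general Lorentzian geometry this is guaranteed only in globally hyperbolic spacetimes and requires the $C^{1,1}$ extensions of smooth causality theory mentioned in the introduction.
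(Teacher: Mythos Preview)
Your argument is correct and follows the same overall strategy as the paper: find causally separated open neighbourhoods of $\tilde x$ and $\tilde y$, then use the support property of $G$ (via that of $G^\pm$) to see that $K_G$ vanishes on test tensors supported there.

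The one genuine difference lies in how the neighbourhoods are produced. You invoke the closedness of the causal relation $J\subset M\times M$ in a $C^{1,1}$ globally hyperbolic spacetime and read off a product neighbourhood directly from the complement. The paper instead constructs the neighbourhoods by hand: it intersects $J(\tilde y)$ with the Cauchy surface $\{t\}\times\Sigma$ through $\tilde x$, uses compactness of this intersection (from global hyperbolicity) to find a compact $\tilde V\ni\tilde x$ disjoint from it, repeats the construction symmetrically to obtain $\tilde U\ni\tilde y$, and then passes to the interiors of the domains of dependence $D(\tilde V)$, $D(\tilde U)$. Your route is shorter and more conceptual, at the cost of needing to cite the low-regularity closedness of $J$ carefully; the paper's route is more self-contained and makes explicit use of the ultrastatic slicing, but is longer. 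Both are fine, and the final pairing computation $\langle G(\psi),\phi\rangle=0$ is identical in spirit.
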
}
 \begin{proof}
Since the support of $K_G$ is the complement of the largest open set where $K_G$ vanishes, it is enough to show that there are open neighbourhoods $V$ of $\tilde x$ and 
$U$ of $\tilde y$ such that $K_G$ vanishes in $W=V\times U$.

We construct the sets $V$ and $U$ as follows: 
{For globally hyperbolic spacetimes there exists  a time function and a foliation by Cauchy surfaces i.e. $M=\mathbb{R}\times \Sigma$, see \cite[Theorem 1.1]{sanchez}, \cite[Theorem 5.9]{clemens}}. Let $\tx\in \{t\}\times\Sigma$ and $\ty\in \{s\}\times\Sigma$. Without loss of generality we assume $t\le s$.  
Since $M$ is globally hyperbolic, $J(\ty)\cap(\{t\}\times\Sigma)$ is compact and { by hypothesis does not contain $\tx$}.
Therefore there exists a neighbourhood $\tilde{V}$ of $\tx$ in $\{t\}\times\Sigma$ such that  $\overline{\tilde{V}}\cap (J(\ty)\cap(\{t\}\times\Sigma))=\emptyset$. By symmetry, $\ty \notin  J(\overline{\tilde V)}\cap(\{s\}\times \Sigma)=\overline{J(\tilde V)} \cap(\{s\}\times \Sigma)$,  and we thus also find a neighborhood $\tilde U$ of $\ty$ in $\{s\}\times \Sigma$ such that $\tilde U\cap J(\tilde V) \cap (\{s\}\times \Sigma) = \emptyset$. 


Now we consider the total domain of dependence of both sets i.e. $D(\tilde{U})$ and $D(\tilde{V})$\footnote{Given a subset $S$ of $M$, the domain of dependence of $S$ is the set of all points $p$ in $M$ such that every inextendible causal curve through $p$ intersects $S$.}. Notice that $J(D(\tilde{V}))\cap D(\tilde{U})=\emptyset$ and  $J(D(\tilde{U}))\cap D(\tilde{V})=\emptyset$. Otherwise, we could construct a causal curve between  $\tilde{U}$  and  $\tilde{V}$.
We define $V:=\text{Int}D(\tilde{V})$ and $U:=\text{Int}D(\tilde{U})$, see Figure \ref{support}.

{\begin{figure}[!ht]
\centering
\includegraphics[width=0.8\textwidth]{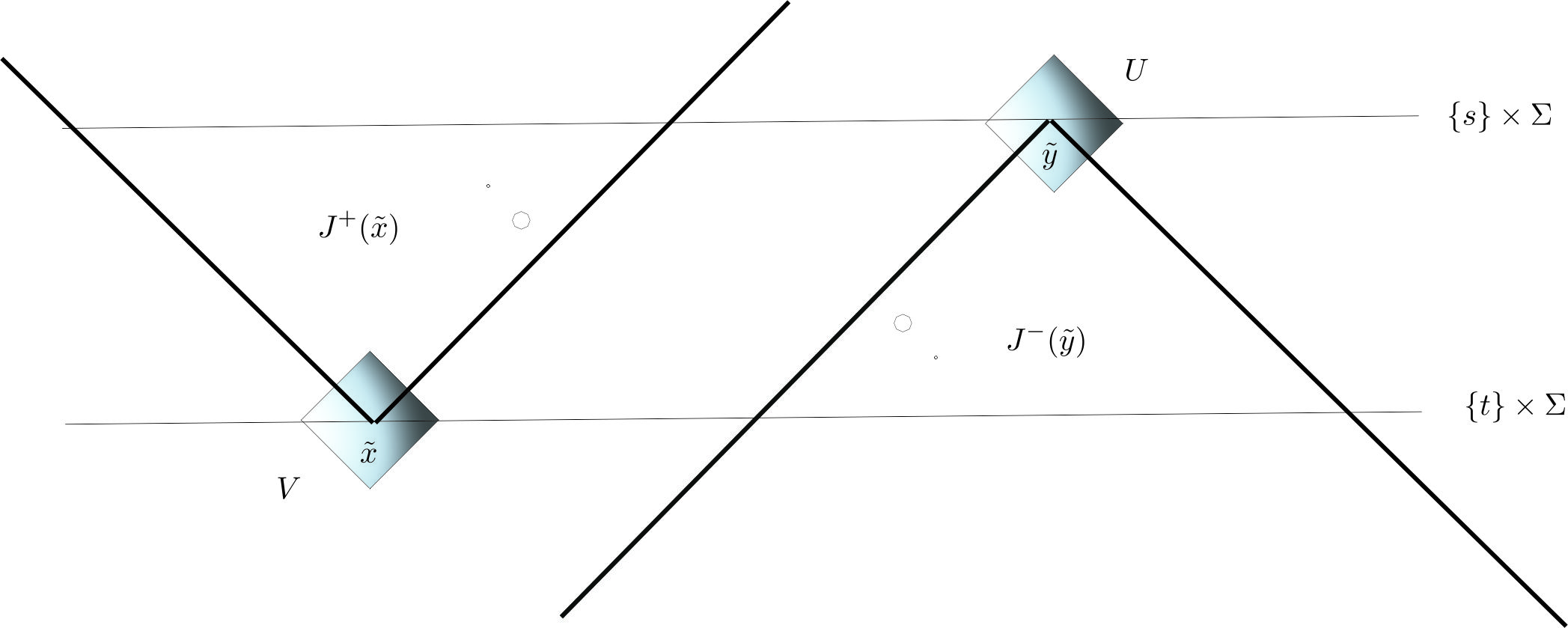}
\caption[]{ $U\cap J(V)=\emptyset$ and $V\cap J(U)=\emptyset$ }
\label{support}\end{figure}}
Now we show that $K_G$ vanishes in $W=V\times U$: 
Choose smooth functions  $\psi$ and $\phi$ with $\supp(\psi)\subset V$ and $\supp(\phi)\subset U$. 
Then 
\begin{align*}
K_G(\psi\otimes \phi)&=\ang{G(\psi),\phi}=\int_MG(\psi)\phi\sqrt{g}dx\\
&=\int_{J(\supp(\psi))\cap \supp(\phi)}G(\psi)\phi\sqrt{g}dx\\
&=\int_{J(V)\cap U}G(\psi)\phi\sqrt{g}dx=0.
\end{align*}
  
\end{proof}

\begin{rem}
    Notice that a totally analogous proof shows that $(\tx,\ty)\notin \supp(K_{G^\pm})$ if $\tx\notin  J^\pm(\ty)$.
\end{rem}

 Regarding the global regularity of the causal propagator for $C^{1,1}$ globally hyperbolic spacetimes, we find a slightly weaker result compared to the smooth case. Nevertheless, in the ultrastatic setting we show that the same regularity as in the smooth setting holds (Lemma \ref{normkg}). 

\begin{Lemma}\label{globalreg} Let $(M,g)$ be a $C^{1,1}$-globally hyperbolic spacetime.Then $K_G\in H_{loc}^{-1-\epsilon}(M\times M)$ for every $\epsilon>0$. 
\end{Lemma}

\begin{proof} \def\ul{\underline} 
We have to show that, given $\psi_1,\psi_2\in {\cal D}(M)$, the { Schwartz kernel of the } product $\psi_2G\psi_1$ is in 
$H^{-1-\varepsilon}(M)$ for every $\varepsilon>0$. Since the proof is local, we may assume (using possibly disconnected coordinate charts)  that $\psi_1$ and $\psi_2$ have their support in the same coordinate neighborhood for $M$. We will therefore work in $\mathbb R^4$, using the notation $\psi_1, \psi_2$ and $G$ also for the representations in local coordinates. In order to distinguish the standard variables and covariables  on $\mathbb R^4$ from those chosen for $M$ we shall denote them by $\ul x$, $\ul \xi$, etc.. Moreover, we choose $\psi_3,\psi_4\in {\cal D}(\mathbb R^4)$ supported in the same coordinate chart, satisfying $\psi_3\psi_2=\psi_2$ and $\psi_4\psi_3=\psi_3$.
Finally,  we denote by $\Lambda^s$, $s\in \mathbb R$, the pseudodifferential operator of order $s$ with symbol $(1+|\ul\xi|^2)^{s/2}$ on $\mathbb R^4$.   

We have
\begin{align}
 \psi_2 G\psi_1=\Lambda^{1+\varepsilon}\Lambda^{-1-\varepsilon}\psi_3\psi_2G\psi_1
=\Lambda^{1+\varepsilon} (\psi_4+(1-\psi_4))\Lambda^{-1-\varepsilon}\psi_3\psi_2 G\psi_1.
\end{align}
The operator $\psi_4\Lambda^{-1-\varepsilon}\psi_3\psi_2 G\psi_1$ maps $H^1(\mathbb R^4)$ to $H^{3+\varepsilon}_{comp}(\mathbb R^4)$ and therefore is a Hilbert-Schmidt operator. Hence it has an integral kernel in $L^2(\R^4\times \R^4)$. 
The operator $(1-\psi_4)\Lambda^{-1-\varepsilon}\psi_3$ is obviously smoothing, since $1-\psi_4$ and $\psi_3$ have disjoint support. Hence it maps $H^2(\R^4)$ to $H^\infty(\mathbb R^4)= \bigcap_sH^s(\R^4)$. 
But more is true:  In the identity 
\begin{eqnarray*}
\ul x_j(1-\psi_4)\Lambda^{-1-\varepsilon} \psi_3 = (1-\psi_4) \Lambda^{-1-\varepsilon} \ul x_j\psi_3
+ (1-\psi_4)[\ul x_j,\Lambda^{-1-\varepsilon}]\psi_3
\end{eqnarray*}
both operators on the right hand side map $H^2(\mathbb R^4)$ to $H^\infty(\mathbb R^4)$ (recall that $[\ul x_j,\Lambda^{-1-\varepsilon}]$ has the symbol $D_{\ul\xi_j}(1+|\ul \xi|^2)^{-1-\varepsilon}$). 
Iterating this identity we find that $(1+|\ul x|^{2N})(1-\psi_4)\Lambda^{-1-\varepsilon} \psi_3\in \mathcal B(H^2(\mathbb R^4),H^\infty(\mathbb R^4))$ for every $N\in \mathbb N$. 
Hence $(1-\psi_4)\Lambda^{-1-\varepsilon} \psi_3$ maps $H^2(\mathbb R^4)$ to  $\mathcal{S} (\mathbb R^4)$.

Therefore it also has an integral kernel in $L^2(\mathbb R^4\times \mathbb R^4)$. Denote for the moment the $L^2$-integral kernel of
$\Lambda^{-1-\varepsilon} \psi_3\psi_2G\psi_1$ by $k_A=k_A(\ul x,\ul y)$.  
Then the kernel $k=k(\ul x,\ul y)$ of $\psi_2G\psi_1$ is given by 
$$\Lambda^{1+\varepsilon}_{(\ul x)} k_A(\ul x,\ul y).$$
Here the notation $\Lambda^{1+\varepsilon}_{(x)} $ indicates that we view $\Lambda^{1+\varepsilon} $
as an operator on $\mathbb R^4\times \mathbb R^4$ that acts only with respect to the first copy of $\mathbb R^4$. In this sense, it is a pseudodifferential operator with symbol in the H\"ormander class $S^{1+\varepsilon}_{0,0}$ and thus maps $L^2(\mathbb R^4\times \mathbb R^4)$ to $H^{-1-\varepsilon}(\mathbb R^4\times \mathbb R^4)$. This shows the assertion. 

\end{proof}

\begin{rem}\label{regadvret}
    Notice that since only the mapping properties of $G$ were used we have also that $K_{G^+},K_{G^-}\in H^{-1-\epsilon}_{loc}(M\times M)$.
\end{rem}

\section{Proof of the Main Theorems}\label{stat}

A globally hyperbolic spacetime is given by a family of Riemannian metrics $\{h_t\}_{t\in\R}$ on $\Sigma$ and a function  $\beta(x,t)>0$ such that the spacetime metric  $(M,g)$, where $M=\mathbb{R}\times \Sigma$, is given by 

\begin{equation}
ds^2=\beta^2(t,x)dt^2-h_t,    
\end{equation}

see \cite[Theorem 1.1]{bernalsplitting}. We will assume that the regularity of the spacetime metric $g$ is $C^\tau$.

In this section we will prove the following results:

\begin{Theorem}\label{mains}
Let $(M,g)$ be a $C^{\tau}$ globally hyperbolic spacetime with $\tau>2$ and $K_G$ the causal propagator of the Klein-Gordon operator $P$. Then $$WF'^{-2+\tau-{\epsilon}}(K_G)\subset C$$ for
every ${\epsilon}>0$, $C$ as in Eq.\eqref{C}. 
\end{Theorem}

\begin{Theorem}\label{mainmain}
For a $C^{\tau+2}$ globally hyperbolic spacetime with $\tau>2$ ,
    $$C\subset WF'^{-\frac{1}{2}}(K_G)\subset WF'^{\tau-\epsilon}(K_G)\subset C$$
holds for $0<\epsilon<\tau+\frac{1}{2}$.
\end{Theorem}

\begin{rem}\label{A}

    In the non-smooth case we cannot expect $G(f)\in C^\infty(M)$ even if $f\in {\cal{D}}(M)$ as a consequence of the fact that $G(f)$ solves the homogeneous Cauchy problem.  
We know from \cite[Proposition B.8]{adiabatic}, that for $f\in \cal{D}(M)$, 
$$ WF^s(G(f))\subset \{(\tx,\txi)\in T^*M;(\tx,\txi,\ty,0)\in WF^s(K_G) \text{ for some } y\in M\}.$$ Therefore $WF'^s(K_G)$ might contain points that are not in $C$.

\end{rem}

\begin{rem}\label{B}
    Since $K_G$ is antisymmetric, we have that for $\rho(\tx,\ty)=(\ty,\tx)$, $\rho^*K_G=-K_G$. This implies that if $(\tx,\txi,\ty,0)\in WF^s(K_G) \text{ for some } y\in M$, then   $(\ty,0,\tx,\txi)\in WF^s(K_G) \text{ for some } y\in M$.
    \end{rem}

\subsection{Proof of Theorem \ref{mains} }

Let $u\in H_{comp}^{1+s-\tau\delta}(M\times M)$ satisfy $P_{(t,x)}(\txx,D_\txx)u=0$.

Then also

$$\partial_\nu\left(\sqrt{|g|}g^{\mu\nu}\partial_\mu u\right)=0.$$

Using the decomposition $\sqrt{|g|}g^{\mu\nu}\partial_\mu=(\sqrt{|g|}g^{\mu\nu}\partial_\mu)^\#+(\sqrt{|g|}g^{\mu\nu}\partial_\mu)^b$  we obtain 

\begin{equation}
P_{(t,x)}(\txx,D_\txx)u=\frac{1}{\sqrt{|g|}}\partial_\nu\left((\sqrt{|g|}g^{\mu\nu}\partial_\mu)^\#u+(\sqrt{|g|}g^{\mu\nu}\partial_\mu)^bu\right).
\end{equation}


We state the behaviour outside the characteristic in this setting.

\begin{Lemma}\label{char3}
For $\tau>2$ and any $\tilde{\epsilon}>0$ 
\begin{equation}
WF^{-1-\tilde{\epsilon}+\tau}(K_G)\subset\Char(P_{(t, x)})\cap \Char(P_{(s, y)}) .
\end{equation} 
\end{Lemma}

\begin{proof}

As the statement is microlocal, we can work in local coordinates in $T^*(\R^4\times \R^4)$ and  consider $\varphi K_G$ for $\varphi\in {\cal{D}}(\R^4\times \R^4)$ with $\varphi=1$ near $\txx_0$.

Let $(\txx_0,\txixi_0)=(\tx_0,\txi_0,\ty_0,\teta_0)\not\in \Char(P_{(t,x)})$\footnote{Underscores to differentiate between the manifold points and points in $\R^8$ will be omitted. See Remark \ref{local}.}
Then, $0<\sqrt{|g(\tx)|}$ and $|g^{\mu\nu}(\tx)\sqrt{|g(\tx)|}\xi_\mu\xi_\nu|\ge C|\txixi|^2$ for sutable $C>0$ in a conic neighbourhood of $(\txx_0,\txixi_0)$.

In particular, $(\txx_0,\txixi_0)\not\in  \Char(\partial_{\nu}(\sqrt{|g|}g^{\mu\nu}\partial_\mu)^\#)$, so there exists a microlocal parametrix $\tilde{q}\in S^{-2}_{1,\delta}$ such that  

\begin{equation}\label{para}
\tilde{q}\partial_{\nu}(\sqrt{|g|}g^{\mu\nu}\partial_\mu)^\#=I+ r,
\end{equation}
where $r(\txx,D_{\txx})$ is microlocally smoothing near $(\txx_0,\txixi_0)$.


Since $P_{(t,x)}(\txx, D_\txx)K_G=0$, we have near $\txx_0$ 
\begin{align}\label{bras}
0&=\partial_\nu(\sqrt{|g|}g^{\mu\nu}\partial_\mu)K_G\\
&=\partial_\nu(\sqrt{|g|}g^{\mu\nu}\partial_\mu)^\#\varphi K_G+\partial_\nu(\sqrt{|g|}g^{\mu\nu}\partial_\mu)^b\varphi K_G,
\end{align}

Since $(\sqrt{|g|}g^{\mu\nu}\xi_\mu)^b\in C^\tau S^{1-\tau\delta}_{1,\delta}$ for every $0\le\delta<1$, we obtain a bounded map 
\begin{equation}
    \partial_\nu(\sqrt{|g|}g^{\mu\nu}\partial_\mu)^b:H^{s+1-\tau\delta}(\R^4\times \R^4)\rightarrow H^{s-1}(\R^4\times\R^4),
\end{equation}
$-(1-\delta)\tau<s<\tau \delta$.

Since $K_G\in H^{-1-\epsilon}_{loc}(M\times M)$ for every $\epsilon>0$ by Lemma \ref{globalreg}, we can choose $\delta$ such that $s=-2+\tau\delta-\epsilon>0$ so that by Eq. \eqref{bras}, we have locally
\begin{equation}
\partial_\nu(\sqrt{|g|}g^{\mu\nu}\partial_\mu)^\#\varphi K_G=-\partial_\nu(\sqrt{|g|}g^{\mu\nu}\partial_\mu)^b\varphi K_G\in H^{-3+\tau\delta-\epsilon}(\R^4\times \R^4).
\end{equation}

Applying the microlocal parametrix $\Tilde{q}$ we obtain



\begin{equation}\label{square}
\tilde{q}\partial_\nu(\sqrt{|g|}g^{\mu\nu}\partial_\mu)^\#\varphi K_G\in H^{-1+\tau\delta-\epsilon}(\R^4\times \R^4).
\end{equation}

By Eq.\eqref{para}, Eq.\eqref{square} equals 

\begin{equation}
    (I+r(\txx,D_{\txx}))\varphi K_G.
\end{equation}

Hence, $K_G\in H^{-1+\tau\delta-\epsilon}(M\times M)$ microlocally near $(\txx_0,\txixi_0)$, so that $(\txx_0,\txixi_0)\not\in WF^{-1+\tau\delta-\epsilon}(K_G)$ for any $\epsilon>0, 0\le \delta<1$. Choosing $\delta$ appropriately we find that, for every $\tilde{\epsilon}>0$

\begin{equation}
    WF^{-1-\tilde{\epsilon}+\tau}(K_G)\subset \Char(P_{(t,x)}). 
\end{equation}

Arguing analogously for  $P_{(s,y)}$ we can see that
\begin{align}\label{preprechar}
WF^{-1+\tau-\tilde{\epsilon}}(K_G)\subset &\Char(P_{(t, x)})\cap \Char(P_{(s, y)}).
\end{align} 

\end{proof}

Notice that $$\Char(P_{(t, x)})\cap \Char(P_{(s, y)})=(\Char(P)\times \Char(P))\cup \mathcal{A}\cup \mathcal{B},$$ where $\mathcal{A}:=\{(\tx,0,\ty,\teta)\in T^*(M\times M):(\ty,\teta)\in \Char(P)\}$ and 
$\mathcal{B}:=\{(\tx,\txi,\ty,0)\in T^*(M\times M) :(\tx,\txi)\in \Char(P)\}$.

We will show now that the sets $\mathcal{A}$ and $\mathcal{B}$ do not belong to $WF^{-2+\tau-\Tilde{\epsilon}}(K_G)$. Nevertheless, for higher wavefront sets, that may not be the case, see Remark \ref{A} and Remark \ref{B}.

In order to show the result we will need the following lemma.

\begin{Lemma}\label{no1}
    $(\tx,\txi,\tx,\mu\txi)\notin WF^{-2+\tau-\tilde{\epsilon}}(K_{G^\pm})$ for $\mu\neq-1$.
\end{Lemma}

\begin{proof}
    Consider a point $(\ty,\teta)\neq(\tx,\txi)$ on the null bicharacteristic $\gamma(\tx,\txi)$, with $\ty\in J^{-}(\tx)$.
    Since $PG^+=I$, it holds
    \begin{equation}
        K_I=K_{PG^+}=P_{(t,x)}K_{G^+}
    \end{equation}
with wavefront set the conormal to the diagonal. As $\mu\neq-1$, $(\tx,\txi,\tx,\mu\txi)$ is not part of it, and neither are the points of the set $\gamma(\tx,\txi)\times\{(\tx,\mu\txi)\}$. Hence there exists an open conic neighbourhood $W$ of the set of all $(\Tilde{z},\tilde{\zeta},\tx,\mu\txi)\in T^*(M\times M),$ where $(\tilde{z},\Tilde{\zeta})$ lies on $\gamma(\tx,\txi)$ between $(\tx,\txi)$ and $(\ty,\teta)$, that does not intersect $WF(K_I)$. We can assume that the base point projection $\Pi(W)$ is relatively compact. We choose $\varphi\in \cal{D}(M\times M)$ with $\varphi=1$ on $\Pi W$. Then 
\begin{equation}
    \emptyset=WF(K_I)\cap W = WF(P_{(t,x)}K_{G^+})\cap W .
\end{equation}

Moreover, $P_{(t,x)}^{\#}(\varphi K_{G^+})=P_{(t,x)}(\varphi K_{G^+})-P_{(t,x)}^b(\varphi K_{G^+})$.

According to Remark 4.4, $K_{G^+}\in H^{-1-\epsilon}_{loc}(M\times M)$ for every $\epsilon>0$, therefore $P_{(t,x)}^b(\varphi K_{G^+})\in H^{-3-\epsilon+\tau}$.
We now apply Theorem \ref{propagation} with $u=\varphi K_{G^+}, s=-3-\Tilde{\epsilon}+\tau$, $\Gamma=W$, $f=P_{(t,x)}K_{G^+}\in H^\infty_{mcl}(W)$, $P_{(t,x)}^b(\varphi K_{G^+})\in H^{s}$. We have $\varphi K_{G^+}\in H^\infty_{mcl}$ near $(\ty,\teta,\tx,\mu\txi)$, since $(\ty,\tx)$ is not in the support of $K_{G^+}$. Hence, Theorem \ref{propagation} implies that $K_{G^+}\in H^{-2-\epsilon+\tau}_{mcl}$ also in a conic neighbourhood of $(\tx,\txi,\tx,\mu\txi)$, as this point lies on the integral curve of the Hamiltonian vector field for the principal symbol of $P_{(t,x)}$. Hence $(\tx,\txi,\tx,\mu\txi)\notin WF^{-2+\tau-\epsilon}(K_{G^+})$. In an analogous way we see that $(\tx,\txi,\tx,\mu\txi)\notin WF^{-2+\tau-\epsilon}(K_{G^-})$ by considering a point $(\ty,\teta)$ on $\gamma(\tx,\txi)$ with $\ty\in J^+(\tx)$.

\end{proof}

\begin{rem}
    Notice that the fact that the wavefront set of $K_I$ is the conormal to the diagonal does not allow one to repeat the same argument in the case $(\tx,\txi,\tx,-\txi)\in WF^s(K_{G^+})$.
\end{rem}

\begin{rem}
    A similar argument holds for the case $(\tx,\lambda\txi,\tx,\txi)\notin WF^{-2+\tau-\tilde{\epsilon}}(K_{G^\pm})$ by using $P_{(s,y)}$.
\end{rem}

\begin{Lemma}\label{char4}
For $\tau>2$ and any $\tilde{\epsilon}>0$ 
\begin{equation}
WF^{-2+\tau-\tilde{\epsilon}}(K_G)\subset\Char(P)\times \Char(P).
\end{equation} 
\end{Lemma}

\begin{proof}
Using Lemma \ref{char3} we just need to show that there are no points from the sets $\mathcal{A}$ or $\mathcal{B}$.     
Let $(\tx,\txi,\ty,0)\in {\cal{B}}\cap WF^{-2+\tau-\tilde{\epsilon}}(K_G)$ then by Theorem \ref{propagation}, we have that $(\gamma(\tx,\txi),\ty,0)\in WF^{-2+\tau-\tilde{\epsilon}}(K_G)$. Now $\ty=(s_1,y_1)$ for some $s_1\in \R,y_1\in \Sigma$. By global hyperbolicity $\gamma(\tx,\txi)$ intersects $\{s_1\}\times \Sigma$ in exactly one point with the covector $\chi\neq 0$. Since causally separated points are not in $\supp(K_G)$, the point of intersection  has to be $(s_1,y_1)$. 
Hence $(s_1,y_1,\chi,s_1,y_1,0)\in WF^{-2+\tau-\tilde{\epsilon}}(K_G)\subset(WF^{-2+\tau-\tilde{\epsilon}}(K_{G^+})\cup WF^{-2+\tau-\tilde{\epsilon}}(K_{G^-}))$. This is a contradiction to Lemma \ref{no1}.
A similar argument holds for points in $\mathcal{A}
$.\end{proof}

\begin{rem}
    The existence of symmetries allows one to show that the Sobolev wavefront set in Lemma 5.5 is already disjoint from the sets $\cal{A}$ and $\cal{B}$. 
    For example, if $M$ is stationary, $K_G$ is of the form $K_G(t-s,x,y)$. Therefore, one has the additional equation $(\partial_t+\partial_s)K_G=0$, that implies $WF^l(K_G)\subset \Char(\partial_t+\partial_s)$ for $l\in \R$. Moreover, $\Char(\partial_t+\partial_s)\cap \cal{A}=\emptyset$ and  $\Char(\partial_t+\partial_s)\cap \cal{B}=\emptyset$.
    A similar argument holds in the case of a sufficiently spatially symmetric spacetime, e.g. cosmological space of the form $ds^2=a(t)(-dt^2+dx^2+dy^2+dz^2)$. In this case,  $K_G$ is of the form $K_G(t,s,x_1-x_2,y_1-y_2,z_1-z_2)$ due to the spatial invariance.  
\end{rem}

Now we establish that points above the diagonal are of a specific form.

\begin{Lemma}\label{diag2}
 If $(\tx,\txi,\tx,\teta)\in WF^{-2+\tau-\tilde{\epsilon}}(K_G)$ for {$\tau>2$}, and some $\tilde{\epsilon}>0$, then $\teta=-\txi$.
\end{Lemma}

\begin{proof}
Suppose  $\tilde \eta$ and $\tilde \xi$ are linearly independent, i.e., $\teta\neq \mu\txi$ for $\mu\in \mathbb{R}$. By Lemma \eqref{char4} $(\tilde x,  \tilde \xi, \tilde x,\tilde \eta)\in \Char(P)\times\Char(P)$.
Now we choose a Cauchy hypersurface $\Sigma_{ t_0}=\{t_0\}\times \Sigma$ such that the null geodesic with initial data $(\tx,\txi)$ and the null geodesic with initial data $(\tx,\teta)$ intersect it. These points of intersections are  unique by global hyperbolicity. Moreover, using the condition $\teta\neq\mu\txi$, we can choose $\Sigma_ {t_0}$ such that these points are distinct. We denote these points  by $(t_0,x_0), (t_0, y_0)$. Furthermore, they are not causally related. 
Now $K_G\in H_{loc}^{-1-\epsilon}(M\times m)$ so $\partial_{\nu}(\sqrt{|g|}g^{\mu\nu}\partial_\mu)^bK_G\in H^{-3-\epsilon+\tau\delta}(\R^4\times \R^4)$ and therefore if $(\tx,\txi,\ty,-\teta)\in WF^{-2+\tau-{\epsilon}}(K_G)$ then $(\gamma(\tx,\txi),\gamma (\ty,-\teta))\in { WF^{-2-\epsilon+\tau}}(K_G)$
 where $\gamma(\tx,\txi)$ is the null bicharacteristic with initial data $(\tx,\txi)$ and $\gamma(\tx,\teta)$ is the null bicharacteristic with initial data $(\tx,\teta)$.

In particular  
$(t_0,x_0, t_0, y_0)\in \Pi ({WF^{-\frac{1}{2}-\epsilon+\tau}}(K_G))$, 
where $\Pi$ is the projection from $T^*(M\times M)$ to $M\times M$. 
However,  this is a contradiction to Proposition \ref{causal}, since $(t_0,x_0, t_0, y_0)\notin\supp(K_G)$. Therefore, $\teta=\mu\txi$. 

Now as a consequence of the fact  that $K_G=K_{G^+}+K_{G^-}$ and $WF^s(K_G)\subset WF^s(K_{G^+})\cup WF^s(K_{G^-})$ for all $s$, Lemma \ref{no1} implies that $\mu=-1$.
\end{proof}

\emph{Proof of Theorem \ref{mains}}

Let $(\tx,\txi,\ty,-\teta)\in WF^{-2+\tau-{\epsilon}}(K_{G})$.
The propagation of singularities result (Theorem \ref{propagation})  implies that  $(\gamma(\tx,\txi),\gamma (\ty,-\teta))\in { WF^{-2-\epsilon+\tau}}(K_G)$, where  $\gamma(\tx,\txi)$ is the null bicharacteristic with initial data $(\tx,\txi)$ and $\gamma(\ty,-\teta)$ is the null bicharacteristic with initial data $(\ty,-\teta)$.

Now we choose a Cauchy surface $\Sigma_{t_1}=\{t_1\}\times \Sigma$ and suppose that $(t_1,x_1,\tilde{\xi}_1,t_1,x_2,\tilde\xi_2)\in (\gamma(\tx,\txi),\gamma(\ty,-\teta))\cap (\Sigma_{t_1}^2)$. 
By Lemmas \ref{causal} and  \ref{char4}, $(t_1, x_1,\txi_1),(t_1, x_2,\txi_2)\in \Char(P)$, $x_1=x_2$, and $\tilde \xi_2 = -\tilde \xi_1$.

Next we define a curve $\tilde{\gamma}:(-\infty,\infty)\rightarrow M$ as follows. First, we shift the parametrization  $\lambda$ in the definition of the null bicharacteristics so that 
$$\gamma(\tx,\txi)(t_1)=  (t_1,x_1,\txi_1), \quad   \gamma(\ty,-\teta)(t_1)=  (t_1,x_1,-\txi_1).$$  
Then, we denote by $\Pi:T^*M\to M$ the canonical projection and define two curves in $M$ by $$\gamma_1(\lambda):=\Pi(\gamma(\tx,\txi)(\lambda)),\quad \gamma_2(\lambda):=\Pi(\gamma(\ty,-\teta)(\lambda)).$$

Notice that we have ${\gamma_1}(t_1)=(t_1,x_1), \dot{\gamma_1}(t_1)=g^{-1}(\txi_1,\cdot)$ and ${\gamma_2}(t_1)=(t_1,x_1),\dot{\gamma_2}({t_1})=g^{-1}(-\txi_1,\cdot)$.
Moreover, we can assume that $\tx = \gamma_1(a)$ and $\ty =\gamma_2(b)$ for suitable $a, b\in \mathbb R$ with $a<t_1<b$. 

Finally, let 

\begin{equation}
\tilde{\gamma}(\lambda)=\begin{cases}
\gamma_1(\lambda)& \lambda\in(-\infty, t_1]\\
-\gamma_2(\lambda)& \lambda\in(t_1,\infty)
\end{cases}
\end{equation}
where $-\gamma_2$ denotes the curve with opposite orientation.

Then $\tilde \gamma(a) = \tx$, $\tilde \gamma(b) = \ty$; 
moreover $g(\cdot, {\dot{\tilde\gamma}})|_{T_{\tx}M}=\txi$, $g(\cdot, {\dot{\tilde\gamma}})|_{T_{\ty}M}=\teta$ and therefore, $\tilde{\gamma}$ is a null geodesic between $\tx$ and $\ty$  with cotangent vectors $\txi$ at $\tx$ and $\teta$ at $\ty$,
%
i.e. $(\tx,\txi,\ty,-\teta)\in C':=\{(\tx, \txi,\ty, -\tilde{\eta}) ;(\tx, \xi; \ty, \tilde{\eta}) \in C\}$, see Figure \ref{propfig}.

{\begin{figure}[!ht]
\centering
\includegraphics[width=0.8\textwidth]{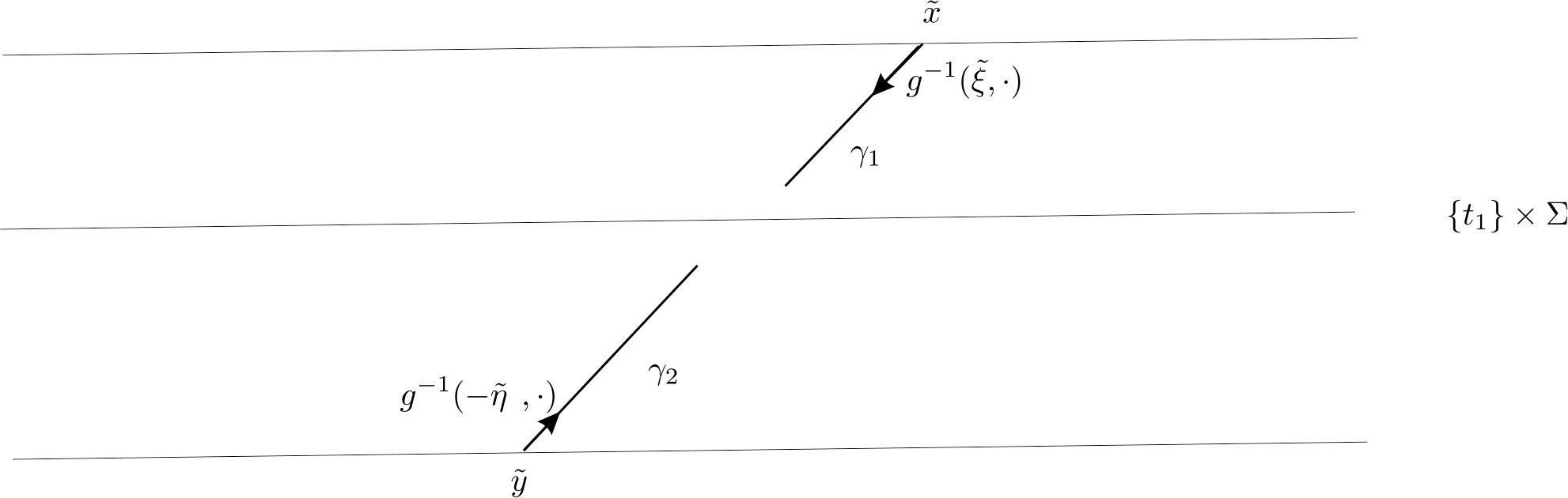}
\caption{$\gamma_1$ is a null geodesic that satisfies $\gamma(a)=\tilde{x},\dot{\gamma}_1(a)=g^{-1}(\xi,\dot)$ and $\gamma_2$ is a null geodesic that satisfies $\gamma(b)=\tilde{y},\dot{\gamma}_2(b)=g^{-1}(-\eta,\cdot)$ }
\label{propfig}\end{figure}}
This shows 
\begin{equation}
{WF^{-2-\epsilon+\tau}}(K_G)\subset C'
\end{equation}
or, equivalently ${WF'^{-2-\epsilon+\tau}}(K_G)\subset C$.


\subsection{Proof of Theorem \ref{mainmain}.}



Now we show that $C$ is contained in $WF'^{-\frac{1}{2}}(K_G)$.

\begin{Lemma}\label{def}
    Let $P$ be the Klein-Gordon operator with $g\in C^{\tau+2},\tau>2$. Then   $C\subset WF'^{-\frac{1}{2}}(K_G)$
\end{Lemma}

\begin{proof}
{
 Using Proposition C.1 of \cite{fulling}, see also \cite{muller}, there exists an interpolating spacetime of regularity $C^\tau$, \((\bar{M}, \bar{g})\), which satisfies the following conditions: There exist times \(t_1\) and \(t_2\) such that for \(t < t_1\), \((\bar{M}, \bar{g})\) is isometric to a neighborhood of a Cauchy surface \(\tilde{\Sigma}\) of a smooth, globally hyperbolic spacetime $(M_s,g_s)$. Furthermore, for \(t > t_2\), \((\bar{M}, \bar{g})\) is isometric to a neighborhood of a Cauchy surface \(\Sigma\) of the non-smooth spacetime \(({M}, {g})\).

 Now if $K_{\Bar{G}}$ is the causal propagator associated to \((\bar{M}, \bar{g})\), its  restriction to $t<t_1$, denoted $K_{\Bar{G}}|_{t<t_1}$  corresponds to the smooth causal propagator \cite[Proposition 3.5.1]{bgp} and therefore 
 \begin{eqnarray*}
 WF'(K_{\Bar{G}}|_{t<t_1})
 =\Bar{C}\cap T^*( \{(t,x)\in\bar{M}; t<t_1 \}\times \{(t,x)\in\bar{M} ; t<t_1\}) ,   
 \end{eqnarray*} where $\Bar{C}$ denotes the canonical relationship associated to $\bar{g}$. 

Let $(\tx,\txi,\tx,-\txi)\in \bar{C}'$ in the non-smooth region i.e. $\tx=(t_3,x)$ with $t_3>t_2$.

    By global hyperbolicity the base point projections of the null bicharacteristics $\gamma(\tx,\txi)$ and $\gamma(\tx,-\txi)$ intersect the hypersurface $t=t_0<t_1$ at one unique point denoted $w$. Moreover, as a consequence of being in $\bar{C}'$, we have $(w,\chi,w,-\chi)=(\gamma(\tx,\txi)\times \gamma(\tx,-\txi))\cap(\tilde{\Sigma}_{t_0}\times \tilde{\Sigma}_{t_0})$.

Since we are in the smooth part, smooth theory implies, in particular, that $(w,\chi,w,-\chi)\in WF^{s}(K_{\Bar{G}}|_{t<t_1})$ for $-\frac{1}{2}\le s$ by combining \cite[Theorem 6.5.3]{duistermaat} and \cite[Propositon B.10]{adiabatic}. Now, an application of Theorem \ref{propagation} gives $(\tx,\txi,\tx,-\txi)\in WF^{s}(K_{\Bar{G}})$ for $-\frac{1}{2}\le s$.

Furthermore, by \cite[Theorem 5.10, Theorem 5.8]{green},  the restriction of $K_{\Bar{G}}$ to $t>t_2$, denoted $K_{\Bar{G}}|_{t>t_2}$,  in a neighbourhood of $\Sigma_{t_3}$ is the same as the restriction of the non-smooth causal propagator, $K_G$, associated to $({M}, {g})$.
Hence, $(\tx,\txi,\tx,-\txi)\in WF^{s}(K_{G})$. 

Another application of Theorem \ref{propagation} using the null bicharacteristics from $(M,g)$ gives $C'\subset WF^{-\frac{1}{2}}(K_G) $ i.e. $C\subset WF'^{-\frac{1}{2}}(K_G) $.
}






\end{proof}

\emph{Proof of Theorem \ref{mainmain}.}
    
        The combination of Lemma \ref{def} and Theorem \ref{mains} gives the result.

\section{Appendix}
\subsection{Sobolev Spaces}\label{AppendixA}
$H^s(\R^n),\,s\in \R$, is the set of all tempered distributions $u$ on
$\R^n$ whose Fourier transforms ${\cal{F}}{u}$ are regular distributions
satisfying
\[ \| u\|^2_{H^s(\R^n)} := \int\!\ang{\xi}^{2s} |{\cal{F}}{u}(\xi)|^2\,d^n\xi <
\infty.\]

Let $(M,g)$ be a (possibly) non-compact Riemannian manifold which
is geodesically complete. The Laplace-Beltrami operator $-\Delta_{g}$
is essentially selfadjoint if the regularity of the metric is $C^\tau$ for $\tau\ge2$, \cite[Theorem 2.4]{str}.
For lower regularity see Appendix \ref{AppendixB}.
By $H^s(M)$ we denote
the completion of ${\cal{D}}(M)$ with respect to the norm 
\[ \|u\|_{H^s(M)} := \|(I-\Delta_g)^{s/2}u\|_{L^2(M)}.\]

If $M$ is compact, $H^{s}(M)$ is independent of the metric. 


{ For an open subset $U$ of $M$ we define the local Sobolev spaces: 
\begin{eqnarray*}
H^s_{loc} (U) := \{u\in \Dp{M}; \varphi u\in H^s(M) \mbox{ for all }\varphi \in {\cal{D}}(U)\}.
\end{eqnarray*}{
and 
\begin{eqnarray*}
H^s_{comp} (U) := \{u\in \Dp{M}; u\in H^s(M) \text{ and }\supp(u)\subset U \text{ is compact}\}.
\end{eqnarray*}
}

{Notice that given a manifold $M$, the spaces $H^s_{loc}(U)$ and $H^s_{comp}(U)$ are independent of the Riemannian metric used to define the Sobolev spaces $H^s(M)$.}

For a compact $n$-dimensional manifold $\Sigma$ we can also define Sobolev spaces on $\mathbb R\times \Sigma$  relying on local coordinates. Namely, suppose $\{U_j:j\in J\}$ is an open cover of $\Sigma$ by coordinate charts and $\{\varphi_j:j\in J\}$ is a subordinate partition of unity. Given a function  $u$ on $\mathbb R\times \Sigma$, we say that $u\in \tilde H^s(\mathbb R\times \Sigma)$ provided that, using local coordinates on $\Sigma$,  $\varphi_j(x)  u(t,x) \in H^s(\mathbb R\times \mathbb R^n)$  for  $j=1,\ldots, J$ (more formally: For the coordinate map $\kappa_j:U_j\to \R^n$, we have 
$(id\times \kappa_{j*})(\varphi_j u)\in H^s(\R\times \Sigma)$).
For integer $k$, this  is equivalent to asking that, for all multi-indices $\alpha$ with $|\alpha |\le k$, we have $\partial^\alpha_{t,x} u\in L^2(\R\times \R^n)$ in local coordinates. 
Moreover, $\R\times \Sigma$ is a manifold of bounded geometry and the Sobolev spaces introduced in this setting coincide with the spaces $\tilde H^s$, see e.g. Theorem 3.9 in \cite{GS13}.  


\begin{Lemma}\label{Sobolev} 
Let $g= dt^2 + h_{ij}dx^i dx^j$ be an ultrastatic metric of regularity $C^\tau$  on $\R\times \Sigma$ with $\tau >1$. Then 
$$ H^s(\R\times \Sigma)  = \tilde H^s(\R\times \Sigma), \quad 0\le s\le 2,$$
i.e. the two Hilbert spaces coincide up to equivalent norms.
\end{Lemma} 

\begin{proof}
The assertion is obvious for $s=0$, when $\tilde H^0(\R\times \Sigma) = L^2(\R\times \Sigma) = H^0(\R\times \Sigma)$. We have 
$$H^s(\R\times \Sigma) = \mathscr D((I-\Delta_g)^{s/2}) = 
[L^2(\R\times \Sigma),\mathscr D(I-\Delta_g)]_{s/2},$$
where the first equality holds by definition and the second is \cite[Section I.2.9]{amann} for complex interpolation.

In view of the interpolation property for the standard Sobolev spaces, it is sufficient to show the assertion for $s=2$. 
Assuming that $\tau>1$, the operator $I-\Delta_g$ is  strongly elliptic with coefficients in $C^{\tau-1}$. 
By elliptic regularity, its maximal domain is $\tilde H^2(\R\times \Sigma)$.
This is a well-known fact, although a reference seems to be hard to find. 
In order to see it we first note that, by Lax-Milgram's theorem, every $u\in L^2(\R\times \Sigma) $ with 
$\Delta_g u \in L^2(\R\times \Sigma)$ belongs to $H^1(\R\times \Sigma)$. 
Symbol smoothing as in Remark \ref{propagation1} then shows that $u$ even belongs to $\tilde H^2(\R\times \Sigma)$. Hence the maximal domain is a subset of $\tilde H^2(\R\times \Sigma)$.

The minimal domain is also $\tilde H^2(\mathbb R\times \Sigma)$, since $\mathcal D(\mathbb R\times \Sigma)$ is dense in $\tilde H^2(\mathbb R\times \Sigma)$. Hence $\mathscr D(I-\Delta_g)=\tilde H^2(\mathbb R\times \Sigma)$.   

\end{proof} 

\begin{rem}An analogous construction can be performed for $\R^2\times \Sigma^2$ and the analog of Lemma \ref{Sobolev} holds. 
\end{rem} 
}

{
\subsection{Essential Self-adjointness of the Laplace-Beltrami Operator}\label{AppendixB}

\begin{Theorem}\label{self}
Let $(\Sigma,h)$ be a smooth compact $n$-dimensional manifold equipped with a Riemannian metric of regularity $C^{1}(\Sigma)$. Then the Laplace-Beltrami operator $\Delta_{h}$ is essentially self-adjoint.
\end{Theorem}

 We follow Strichartz's article \cite{str} that uses the following criterion \cite[Theorem X.1]{reed}.
 
 \begin{Theorem}\label{stri}
 Let $A$ be any closed negative-definite symmetric, densely defined operator on a Hilbert space $H$. Then $A=A^*$ if and only if there are no
 eigenvectors with positive eigenvalue in the domain of $A^*$.
 \end{Theorem}

 Now we will state the following helpful  result 
 
 \begin{Proposition}\label{eigen}
  Let $u$ be an $L^2(\Sigma)$ function that satisfies $\Delta u=\lambda u$ for some $\lambda>0$. Then $u$ is identically zero.  
 \end{Proposition}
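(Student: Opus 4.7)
The plan is to pair the eigenvalue equation $\Delta_h u = \lambda u$ with $\bar u$, integrate over $\Sigma$, and produce via integration by parts the identity
\begin{equation*}
-\int_\Sigma h^{ij}\,\partial_i u\,\partial_j \bar u\,\sqrt{h}\,dx
= \int_\Sigma (\Delta_h u)\,\bar u\,\sqrt{h}\,dx
= \lambda\,\|u\|_{L^2(\Sigma)}^2.
\end{equation*}
The left-hand side is non-positive by the ellipticity of $h$, while the right-hand side is strictly positive unless $u\equiv 0$; this contradiction is the core of the argument.

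The non-trivial preliminary is to upgrade the regularity of $u$ so that the integration by parts is legitimate. Since $u\in L^2(\Sigma)$ satisfies $\Delta_h u = \lambda u\in L^2(\Sigma)$ in the sense of distributions, I would apply a Caccioppoli-type estimate to the divergence-form operator $\Delta_h = \tfrac{1}{\sqrt{h}}\partial_i(\sqrt h\,h^{ij}\partial_j\,\cdot)$. Under the hypothesis $h\in C^0(\Sigma)\cap W^{1,n}(\Sigma)$, the principal coefficients $h^{ij}$ are bounded and uniformly elliptic, while the lower-order terms arising from $\partial_i(\sqrt h\,h^{ij})/\sqrt h$ lie in $L^n$, which is precisely the threshold regularity at which the Caccioppoli argument still closes via the Sobolev embedding $H^1\hookrightarrow L^{2n/(n-2)}$. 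This yields $u\in H^1_{\mathrm{loc}}(\Sigma)$, and compactness of $\Sigma$ then promotes this to $u\in H^1(\Sigma)$.

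With $u\in H^1(\Sigma)$ in hand, I would justify the integration by parts by approximating $u$ by a sequence $u_k\in C^\infty(\Sigma)$ converging in the $H^1$-topology and using continuity of the Dirichlet bilinear form $B(u,v):=\int_\Sigma h^{ij}\,\partial_i u\,\partial_j\bar v\,\sqrt h\,dx$ on $H^1(\Sigma)\times H^1(\Sigma)$, for which only $h^{ij},\sqrt h\in L^\infty$ is required. The displayed identity above then follows; combined with $\lambda>0$ it forces $\|u\|_{L^2(\Sigma)}=0$, i.e.\ $u\equiv 0$.

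The main obstacle is the initial regularity step $u\in L^2\Rightarrow u\in H^1$ under the weak hypothesis $h\in C^0\cap W^{1,n}$, since one cannot invoke the usual $H^2$-elliptic regularity for non-Lipschitz metrics. However, the Caccioppoli estimate — obtained by testing the equation against $\chi^2 u$ for a smooth cutoff $\chi$ and controlling the cross-terms by Sobolev embedding against the $W^{1,n}$-norm of the coefficients — is exactly the tool that works at this regularity threshold, as in the approach of Strichartz \cite{str} in the proof of essential self-adjointness.
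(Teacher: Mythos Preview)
Your core argument is the same as the paper's: pair the eigenvalue equation with $\bar u$, integrate by parts to exhibit $(\Delta_h u,u)_{L^2}=-\|du\|_{L^2}^2\le 0$, and contradict $\lambda>0$. The paper, however, handles the preliminary regularity step differently. Rather than stopping at $H^1$, it invokes \cite[Theorem 9.15]{gil} to place the weak solution directly in $H^2(\Sigma)$, and then cites a divergence theorem valid at this joint regularity of $u$ and $h$ \cite[Proposition A.1]{stokes} to justify the integration by parts. So your claim that ``one cannot invoke the usual $H^2$-elliptic regularity for non-Lipschitz metrics'' is not what the paper does; they do invoke it, using that the principal coefficients are continuous and the lower-order ones lie in the appropriate Lebesgue class.

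One caution on your route: the Caccioppoli estimate, as usually formulated, presupposes $u\in H^1_{\mathrm{loc}}$ in order to insert $\chi^2 u$ as a test function, so using it to \emph{prove} $u\in H^1$ from $u\in L^2$ is circular as written. You would need an additional bootstrap (difference quotients, or freezing the continuous principal coefficients and using constant-coefficient regularity) to first place $u$ in $H^1_{\mathrm{loc}}$ before the Caccioppoli machinery applies. Once that is in place your argument goes through, and it has the virtue of requiring only $H^1$ rather than $H^2$; the paper's version trades this for a shorter proof at the cost of citing a sharper regularity theorem and a non-standard divergence theorem.
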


 {\emph{Proof.}} Let $u$ be a weak solution which by elliptic regularity satisfies $u\in H^2({\Sigma})$. 

Hence, 
 \begin{align}
  \lambda\left(u, u\right)_{L^{2}(\Sigma)} =&(\Delta u ,u)_{L^{2}(\Sigma)}= -\left(du, du\right)_{L^{2}(\Sigma)}
 \end{align}

 Now $\lambda> 0$ so we have $u=0$.
\qed

\emph{Proof of Theorem \ref{self}.}
By direct computation $\Delta_{h}$ is negative-definite and symmetric. That it is densely defined follows from the density of ${\cal{D}}(M)$ in $L^2(M)$ for continuous metrics (see \cite[Proposition 7]{lashi} for even rougher cases).  The application of Theorem \ref{stri} taking into account Proposition \ref{eigen} gives the result.
\qed

For the non-compact case, one could follow the construction in Strichartz's article. However, suitable modifications are required under the regularity of Theorem \ref{self}. For example, one would have to use an integral distance as in \cite[Theorem 5.11]{integral}  to show the desired properties of the approximations to unity. Then, one would need to verify that the elliptic regularity results hold in that situation as well. 

Since we are only interested in the case of $M=\R^2\times \Sigma ^2$ and the operator $2mI-\partial_{tt}-\partial_{ss}-\Delta_{h_{x}}-\Delta_{h_{y}}$ under $C^{1,1}$ regularity assumptions,  we will proceed in a different manner. 

\begin{Lemma}\label{selfi}
The operator  $2mI-\partial_{tt}-\partial_{ss}-\Delta_{h_{x}}-\Delta_{h_{y}}$, where $h_{x},h_{y}$ are Riemannian metrics of regularity $C^{1,1}$, is essentially self-adjoint with domain $H^2(\R^2\times \Sigma ^2)$.  
\end{Lemma}
\begin{proof}

By \cite[Lemma 2.1]{reed3} we obtain that  $-\partial_{tt}-\partial_{ss}-\Delta_{h_{x}}-\Delta_{h_{y}}$ is essentially self-adjoint in 
$L^2(\R^2\times\Sigma^2)$ with domain 
${\cal{D}}(\R)\otimes {\cal{D}}(\R)\otimes C^\infty(\Sigma^2)$. Since ${\cal{D}}(\R)\otimes {\cal{D}}(\R)\otimes C^\infty(\Sigma^2)$ is dense in $H^2(\R^2\times \Sigma ^2)$ which carries the graph norm of  $-\partial_{tt}-\partial_{ss}-\Delta_{h_{x}}-\Delta_{h_{y}}$, we obtain that the closure of the domain is $H^2(\R^2\times \Sigma^2)$.
Now  $-\partial_{tt}-\partial_{ss}-\Delta_{h_{x}}-\Delta_{h_{y}}$  and $2mI$ commute and are self-adjoint.  By \cite[Lemma 4.16.1]{putman} $2mI-\partial_{tt}-\partial_{ss}-\Delta_{h_{x}}-\Delta_{h_{y}}$  is self-adjoint with domain $H^2(\R^2\times \Sigma ^2)$.
\end{proof}
}

\subsection{An Equivalent Sobolev Norm}

The main results of this section are the following proposition and Corollary \ref{normneg}.
{
\begin{Proposition}\label{normeigen}
Let {$\Sigma$ be a compact manifold and} $\{\phi_j\otimes\phi_k; j,k=1,2,\ldots\}$ be an orthonormal basis of $L^2(\Sigma)\otimes_H L^2(\Sigma) $ associated to the eigenfunctions $\{\phi_j\}$ of the operator $mI-\Delta_h$, $m>0$. 
Writing   $u\in L^2((\R\times \Sigma)\times (\R\times \Sigma)) \cong
 L^2(\mathbb{R}^2\times \Sigma^2)\cong  L^2(\mathbb{R}^2)\otimes_{H} L^2(\Sigma)\otimes_{H} L^2(\Sigma)$ in the form 
\begin{eqnarray}\label{repu}
{u(t,s, x,y) =\sum_{j,k}u_{jk}(t,s)\phi_j(x)\phi_k(y)}  \quad \text{with }
u_{jk}=\ang{u,\phi_j\otimes\phi_k}\in  L^2(\mathbb{R}^2),
\end{eqnarray}
we obtain the following alternative description of the Sobolev spaces:  For $0\le s\le2$ 
\begin{align*}
&H^{s}(\R^2\times \Sigma^2)\\ 
&=\{u\in \mathcal S'(\mathbb{R}^2\times \Sigma^2);
\sum_{j,k}\int_{\mathbb{R}^2}(\xi_0^2+\eta_0^2+\lambda_j^2+\lambda_k^2)^{s}|(\mathcal F{u}_{jk)}(\xi_0, \eta_0)|^2d\xi_0d\eta_0<\infty\}.\nonumber
\end{align*}
\end{Proposition}
}

Here $\mathcal S'(\mathbb{R}^2\times \Sigma^2)$ is the dual space to $\mathcal S(\mathbb{R}^2\times \Sigma^2) := \mathcal S(\R^2) \hat\otimes_\pi C^\infty(\Sigma^2)$. 
First we show the result in the particular case  $s=2$:

\begin{Lemma}\label{lemmah2}
\begin{align*}
&H^{2}(\R^2\times \Sigma^2) \\
&=\{u\in \mathcal S'(\mathbb{R}^2\times \Sigma^2); \sum_{j,k}\int_{\mathbb{R}^2}(|\xi_0|^2+|\eta_0|^2+\lambda_j^2+\lambda_k^2)^{2}|(\mathcal F{u}_{jk})(\xi_0, \eta_0)|^2d\xi_0d\eta_0<\infty\}.
\end{align*}
\end{Lemma}

\begin{proof}
By definition (see Appendix \ref{AppendixA})
\begin{equation}\label{Hs}
H^{2}(\R^2\times \Sigma^2)
=\{u\in L^2(\mathbb{R}^2\times \Sigma^2); (I-\partial_{tt}-\partial_{ss}-\Delta_{h{_{x}}} -\Delta_{h{_{y}}})u\in L^2(\mathbb{R}^2\times \Sigma^2)\},
\end{equation}
where we have equipped $\Sigma\times \Sigma$ with the product metric $\tilde{h}$ induced by the metric $h$ on each of the  components
so that   
$\Delta_{\tilde{h}}=\partial_{tt}+\partial_{ss}+\Delta_{h_{x}} +\Delta_{h_{y}}$, where  $\Delta_{h_x}$ and $\Delta_{h_y}$ are the Laplacians for the metric $h$ on the first and second component of $\Sigma\times \Sigma$, respectively. 
Since $m>0$ we may (at the expense of obtaining an equivalent norm) replace $I$ in  
Eq.\eqref{Hs} by $2mI$. 
Writing $u\in H^2(\mathbb R^2\times \Sigma^2)\subset L^2(\mathbb R^2\times \Sigma^2)$ in the form \eqref{repu}  and using the orthonormality of the set $\{\phi_l\}_l$ in $L^2{(\Sigma)}$ we obtain 

\begin{align*}
&\|u\|^2_{H^2(\mathbb{R}^2\times \Sigma^2)}\\
&:=\int_{\mathbb{R}^2}\int_{\Sigma\times \Sigma}|(2mI-\partial_{tt}-\partial_{ss}-\Delta_{h{_{x}}} -\Delta_{h{_{y}}})u|^2\sqrt{h(x)}\sqrt{h(y)}dtdsdxdy\\
&=\int_{\mathbb{R}^2}\sum_{j,k}|-(\partial_{tt}+\partial_{ss})u_{jk}(t,s)+\lambda^2_{j}u_{jk}(t,s)+\lambda_{k}^2u_{jk}(t,s)|^2dtds.
\end{align*}

Applying the Fourier transform in $(s,t)$, Plancherel's theorem shows that 
\begin{align*}
&\|u\|^2_{H^2(\mathbb{R}^2\times \Sigma^2)}=\sum_{j,k}\int_{\mathbb{R}^2}
\left(\xi_0^2+\eta_0^2+\lambda^2_{j}+\lambda_{k}^2\right)^2
|({\cal{F}}u_{jk})(\xi_0,\eta_0)|^2d\xi_0 d\eta_0
\end{align*}

which proves the result. 
\end{proof}

Before proving the main proposition we state the following  result found in  Amann \cite[I.(2.9.8)]{amann}.

\begin{Theorem}\label{domain}
Let $A$ be a non-negative self-adjoint operator.  Then we have the following relation for the domains of the powers of $A$: 
$$\mathscr D(A^{(1-\theta)\alpha+\theta\beta})
=[\mathscr D(A^\alpha),\mathscr D(A^\beta)]_\theta$$
for $0\le {\rm Re}\,\alpha< {\rm Re}\,\beta$ and $0<\theta<1$. Here $[\cdot,\cdot]_\theta$ denotes complex interpolation.
\end{Theorem}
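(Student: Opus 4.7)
My plan is to use the spectral theorem to reduce the abstract statement to a concrete interpolation identity for weighted $L^2$ spaces, which is classical. Concretely, since $A\ge 0$ is self-adjoint on a Hilbert space $H$, the spectral theorem supplies a measure space $(\Omega,\mu)$, a unitary $U\colon H\to L^2(\Omega,\mu)$, and a measurable function $a\colon\Omega\to[0,\infty)$ such that $UAU^{-1}$ is multiplication by $a$. Under $U$, the Borel functional calculus identifies $A^s$ with multiplication by $a^s$ for $\Re s\ge 0$, and
\[
\mathscr D(A^s)\;=\;U^{-1}\bigl\{f\in L^2(\Omega,\mu): a^{\Re s}f\in L^2(\Omega,\mu)\bigr\},
\]
with graph norm equivalent to $\bigl\|(1+a^{2\Re s})^{1/2}f\bigr\|_{L^2}$. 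In this way $\mathscr D(A^\alpha)$ and $\mathscr D(A^\beta)$ become the weighted spaces $L^2(w_\alpha\,d\mu)$ and $L^2(w_\beta\,d\mu)$ with $w_\alpha=1+a^{2\Re\alpha}$ and $w_\beta=1+a^{2\Re\beta}$.

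Next I would invoke the Stein--Weiss interpolation theorem for weighted $L^2$ spaces: for nonnegative measurable weights $w_0,w_1$ and $0<\theta<1$,
\[
[L^2(w_0\,d\mu),L^2(w_1\,d\mu)]_\theta \;=\; L^2\bigl(w_0^{1-\theta}w_1^{\theta}\,d\mu\bigr),
\]
with equality of norms. This is proved by a direct application of the three lines lemma to the analytic family $F(z)=w_0^{(1-z)/2}w_1^{z/2}f$ and is the heart of the matter. Applied to $w_\alpha,w_\beta$ it gives $[\mathscr D(A^\alpha),\mathscr D(A^\beta)]_\theta$ as the $L^2$ space with weight $(1+a^{2\Re\alpha})^{1-\theta}(1+a^{2\Re\beta})^{\theta}$.

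To close the argument I would show that this weight is equivalent, uniformly in $a\ge 0$, to $1+a^{2((1-\theta)\Re\alpha+\theta\Re\beta)}$, which is exactly the weight defining $\mathscr D(A^{(1-\theta)\alpha+\theta\beta})$ (noting $\Re((1-\theta)\alpha+\theta\beta)=(1-\theta)\Re\alpha+\theta\Re\beta$). Splitting into $\{a\le 1\}$ and $\{a\ge 1\}$, in each region one summand dominates and elementary inequalities of the form $(1+t^p)^{1-\theta}(1+t^q)^\theta\asymp 1+t^{(1-\theta)p+\theta q}$ for $t\ge 0$ yield two-sided bounds. Pulling back by $U^{-1}$ then identifies the interpolation space with $\mathscr D(A^{(1-\theta)\alpha+\theta\beta})$.

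The main subtlety I expect is the treatment of the kernel of $A$ (the set $\{a=0\}$) together with the fact that $\alpha,\beta$ may be complex: one must interpret $a^s=0$ on $\{a=0\}$ when $\Re s>0$ and check that the complex interpolation functor respects the resulting orthogonal decomposition of $L^2(\Omega,\mu)$, while the imaginary parts of $\alpha,\beta$ contribute only unimodular factors that are absorbed by the analytic family in the Stein--Weiss argument. Beyond this bookkeeping, the proof is a straightforward combination of the spectral realization, weighted $L^2$ interpolation, and a weight equivalence.
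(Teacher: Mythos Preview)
Your approach is correct: the spectral realization reduces the statement to complex interpolation of weighted $L^2$ spaces, and the identity $[L^2(w_0\,d\mu),L^2(w_1\,d\mu)]_\theta=L^2(w_0^{1-\theta}w_1^\theta\,d\mu)$ together with the elementary weight equivalence $(1+a^{2\Re\alpha})^{1-\theta}(1+a^{2\Re\beta})^\theta\asymp 1+a^{2((1-\theta)\Re\alpha+\theta\Re\beta)}$ closes the argument. Two minor remarks: first, your worry about $\{a=0\}$ is moot here because the weights $1+a^{2\Re s}$ are bounded below by $1$, so no degeneracy occurs; second, since $\mathscr D(A^s)$ depends only on $\Re s$ (the factor $a^{i\Im s}$ being unimodular), the complex exponents cause no additional difficulty.

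As for comparison with the paper: there is nothing to compare. The paper does not prove this theorem but simply quotes it from Amann \cite[I.(2.9.8)]{amann} as a known result needed for the proof of Proposition~\ref{normeigen}. Your sketch supplies exactly the kind of argument that lies behind such a citation, and is one of the standard routes to this interpolation identity (the other common route, and the one closer to Amann's treatment, goes through the general theory of fractional powers of sectorial operators and bounded imaginary powers, which yields the same conclusion in a broader setting but with more machinery).
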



{\em Proof of Proposition} \ref{normeigen}. 
Since $(\R\times \Sigma)\times (\R\times\Sigma)$ 
is a complete manifold, the operator $2mI-\Delta_{\tilde{h}}$ is  positive and self-adjoint (see Appendix \ref{AppendixB}). Using Theorem \ref{domain} we obtain  for $0<\theta<1$ 

\begin{eqnarray*}
\lefteqn{\nonumber
H^{2\theta}(\R^2\times \Sigma^2)
=\mathscr D((2mI-\Delta_{\tilde{h}})^\theta)}\\
&=\{u\in \mathcal S'(\mathbb{R}^2\times \Sigma^2); (2mI-\Delta_{\tilde{h}})^{\theta}u\in L^2(\mathbb{R}^2\times \Sigma^2)\}.
\end{eqnarray*}
Since $2mI-\Delta_{\tilde{h}}
= 2mI-\partial_{tt}-\partial_{ss}-\Delta_{h_{x}} -\Delta_{h_{y}}$
can be written as a multiplication operator in the form 
\begin{equation*}
 (2mI-\Delta_{\tilde{h}})u=
 \sum_{j,k}\phi_j(x)\phi_k(y)\int_{\mathbb{R}^2}e^{i(\xi_0 t+\eta_0 s)}\left(\xi_0^2+\eta_0^2+\lambda^2_{j}+\lambda_{k}^2\right)(\mathcal Fu_{jk})(\xi_0,\eta_0) d\xi_0 d\eta_0,
\end{equation*}
we infer from the orthonormality of the $\phi_j$  that $(2mI-\Delta_{\tilde{h}})^\theta u$
in $L^2(\mathbb{R}^2\times \Sigma^2)$, if and only if
%
\begin{eqnarray*}
\lefteqn{%
\Big\|\sum_{j,k}\phi_j(x)\phi_k(y)\int_{\mathbb{R}^2}e^{i(\xi_0 t+\eta_0 s)}\left(\xi_0^2+\eta_0^2+\lambda^2_{j}+\lambda_{k}^2\right)^\theta({\cal{F}}u_{jk})(\xi_0,\eta_0) d\xi_0 d\eta_0\Big\|^2_{L^2(\mathbb R^2\times \Sigma^2)}}\\
&=&\sum_{j,k}\int_{\mathbb{R}^2}\left(\xi_0^2+\eta_0^2+\lambda^2_{j}+\lambda_{k}^2\right)^{2\theta}|({\mathcal{F}}u_{jk})(\xi_0,\eta_0)|^2 d\xi_0 d\eta_0
<\infty.
\end{eqnarray*}
This establishes the required equivalence.
\hfill $\Box$
{
\begin{Corollary}\label{normneg}
For $-1\le \theta\le 0$ we obtain by $L^2$-duality that
\begin{align*}
&H^{2\theta}(\mathbb{R}^2\times \Sigma^2)
= (H^{-2\theta}(\mathbb R^2\times \Sigma^2))'\\
&=\{u\in \mathcal S'(\mathbb{R}^2\times \Sigma^2); \sum_{j,k}\int_{\mathbb{R}^2}(|\xi_0|^2+|\eta_0|^2+\lambda_j^2+\lambda_k^2)^{2\theta}|\mathcal Fu_{jk}(\xi_0, \eta_0)|^2d\xi_0d\eta_0<\infty\},
\end{align*}
with $u_{j,k} = \ang{u,\phi_j\otimes \phi_k}\in \mathcal S'(\R^2)$. 
\end{Corollary}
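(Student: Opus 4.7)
The plan is to combine Proposition \ref{normeigen}, applied at the non-negative exponent $-2\theta \in [0,2]$, with the $L^2$-duality that already appears in the statement. To organize bookkeeping, I would introduce the weight
\[
w(\xi_0,\eta_0,j,k) := |\xi_0|^2+|\eta_0|^2+\lambda_j^2+\lambda_k^2
\]
and, for $\alpha \in \mathbb R$, let $L^2_{w^\alpha}$ denote the Hilbert space of sequences $(f_{jk})_{j,k\in\mathbb N}$ of measurable functions on $\mathbb R^2$ with
\[
\|f\|^2_{w^\alpha} := \sum_{j,k}\int_{\mathbb R^2} w^\alpha |f_{jk}(\xi_0,\eta_0)|^2 \, d\xi_0 d\eta_0 < \infty.
\]
Proposition \ref{normeigen} then says exactly that the map $T\colon u \mapsto (\mathcal F u_{jk})_{jk}$ is an isomorphism $H^{-2\theta}(\mathbb R^2\times \Sigma^2) \to L^2_{w^{-2\theta}}$ (with equivalent Hilbert norms) for every $-1\le \theta\le 0$.

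Next, I would observe that the multiplication operator $f \mapsto w^{-\theta} f$ is an isometric isomorphism $L^2_{w^{-2\theta}} \to L^2(\mathbb R^2 \times \mathbb N^2, d\xi_0 d\eta_0 \otimes \#)$, so $L^2$-self-duality together with Riesz representation identifies the dual of $L^2_{w^{-2\theta}}$, with respect to the \emph{unweighted} pairing $\langle f,g\rangle = \sum_{j,k}\int f_{jk}\overline{g_{jk}}\, d\xi_0 d\eta_0$, with $L^2_{w^{2\theta}}$. Composing with $T$ yields $(H^{-2\theta})' \cong L^2_{w^{2\theta}}$ as a Hilbert space. It then remains to check that this abstract duality coincides with the $L^2(\mathbb R^2\times \Sigma^2)$-pairing used to define $H^{2\theta}$ as $(H^{-2\theta})'$. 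This comes from Plancherel on $\mathbb R^2$ combined with the orthonormality of $\{\phi_j\otimes\phi_k\}$ in $L^2(\Sigma^2)$: for $u,v\in \mathcal S(\mathbb R^2\times \Sigma^2)$,
\[
\langle u,v\rangle_{L^2(\mathbb R^2\times \Sigma^2)} = \sum_{j,k}\int_{\mathbb R^2} u_{jk}(t,s)\overline{v_{jk}(t,s)}\, dt ds = \sum_{j,k}\int_{\mathbb R^2}(\mathcal F u_{jk})\overline{(\mathcal F v_{jk})}\, d\xi_0 d\eta_0,
\]
i.e.\ the $L^2(\mathbb R^2\times \Sigma^2)$-pairing is transported by $T$ to the unweighted pairing above.

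With these three ingredients in hand, both inclusions follow quickly. For the ``$\supset$'' direction, an element of the weighted set on the right defines, via the formula $u = \sum_{j,k}u_{jk}(t,s)\phi_j(x)\phi_k(y)$ and the weighted Cauchy–Schwarz inequality $|\langle u,v\rangle_{L^2}| \le \|Tu\|_{w^{2\theta}}\|Tv\|_{w^{-2\theta}}$, a continuous functional on $H^{-2\theta}$, hence an element of $H^{2\theta}$. For the ``$\subset$'' direction, any $u\in H^{2\theta}=(H^{-2\theta})'$ is, by the Riesz representation argument, represented by a sequence in $L^2_{w^{2\theta}}$; testing against $\phi_j\otimes\phi_k$ in the second factor identifies this sequence with $(\mathcal F u_{jk})$, showing finiteness of the weighted sum.

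The main technical obstacle I anticipate is making rigorous the claim that the concrete $L^2$-pairing used to define $H^{2\theta}=(H^{-2\theta})'$ agrees under $T$ with the unweighted pairing on the sequence side, since $u$ is a priori only in $\mathcal S'(\mathbb R^2\times \Sigma^2)$ and the coefficients $u_{jk}=\langle u,\phi_j\otimes\phi_k\rangle$ are only tempered distributions on $\mathbb R^2$. This reduces to a density statement: finite sums $\sum_{j,k} u_{jk}\phi_j\otimes\phi_k$ with $u_{jk}\in \mathcal S(\mathbb R^2)$ are dense in $L^2_{w^{-2\theta}}$ and therefore (by the isomorphism $T$) in $H^{-2\theta}$, so both pairings are determined by their values on $\mathcal S(\mathbb R^2\times \Sigma^2)$, where Plancherel and orthonormality give the identification directly. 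Once this density is in place, the rest of the argument is routine Hilbert-space duality.
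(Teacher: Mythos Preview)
Your proposal is correct and is exactly the argument the paper has in mind: the paper gives no proof beyond the phrase ``by $L^2$-duality'' embedded in the statement, and what you have written is precisely the standard unpacking of that phrase---identify $H^{-2\theta}$ with $L^2_{w^{-2\theta}}$ via Proposition~\ref{normeigen}, use the self-duality of weighted $L^2$ spaces with respect to the unweighted pairing, and check that this pairing agrees with the $L^2(\mathbb R^2\times\Sigma^2)$-pairing by Plancherel and orthonormality of the $\phi_j\otimes\phi_k$. The density point you flag is the only place requiring care, and your handling of it is adequate.
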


}

\subsection{The Ultrastatic Case}

 In this case we consider a Lorentzian metric $g$ on  $M=\R\times \Sigma$ {with $\Sigma$ compact} of the form  
$$ds^2=dt^2-h_{ij}(x)dx^idx^j$$ 
where $h_{ij}(x)$ are the components of a time independent   Riemannian metric
of H\"older regularity $C^{\tau}$ (when $\tau \in \N$ we will  consider the Zygmund spaces $C_*^\tau$, introduced in Definition \ref{Holder}).

The Klein-Gordon operator $P$ on $M$ is  
\begin{equation}\label{kg}
P\phi=\partial_{tt}\phi-\Delta_h\phi +m^2\phi
\end{equation}
with $\Delta_h\phi=\frac{1}{\sqrt{h}}\partial_{x^i}(h^{ij}\sqrt{h}\partial_{x^j}\phi)$ and $m>0$. 

The causal propagator $G$ is given by $\displaystyle{-\frac{\sin(A^\frac{1}{2}(t-s))}{A^{\frac{1}{2}}}}$ where $A:=-\Delta_h+m^2$ is self-adjoint on $L^2 (\Sigma)$, see Appendix \ref{AppendixB}. 

Moreover, the spectrum of $A$ is a discrete set of positive eigenvalues which we denote by  $\{\lambda_j^2; j=1,2,\ldots\}$, listed  according to their (finite) multiplicity. The associated  
set $\{\phi_j\}_{j\in\mathbb{N}}$ of normalized real eigenfunctions is an orthonormal basis of $L^2(\Sigma)$, see \cite[Theorem 5.8]{spin}.  
For $u,v\in {\mathcal D}(M)$  we have $G(v)\in \mathcal D'(M)$ given by 
\begin{align}
&\langle G(v),u\rangle:=\\
&=\int_{\Sigma}\int_{-\infty}^{\infty}\left(-\frac{\sin(A^\frac{1}{2}(t-s))}{A^{\frac{1}{2}}}v\right)(t,x)u(t,x)\sqrt{h(x)}dxdt\nonumber\\
&=-\int_{M}\left(\int_{-\infty}^{\infty}\sum_j \lambda_j^{-1} \sin(\lambda_j(t-s))\phi_j(x)\int_\Sigma\phi_j(y)v(s,y)\sqrt{h(y)}dyds\right)u(t,x)\sqrt{h(x)}dxdt. \nonumber
\end{align}
%
{
Using that $\langle G(v),u\rangle=\langle K_G,v\otimes u\rangle$ gives the singular integral kernel representation
\begin{equation}\label{kernel}
K_G(t,x;s,y)=-\sum_j \lambda_j^{-1} \sin(\lambda_j(t-s))\phi_j(x)\phi_j(y).
\end{equation}
}
\subsubsection{Global Regularity}
Now we show in Lemma \ref{normkg} that in ultrastatic spacetimes the global regularity of the causal propagator is the same as in the smooth case

\begin{Lemma}\label{normkg}
$K_G\in H_{loc}^{-\frac{1}{2}-\epsilon}(M\times M)$ for every $\epsilon>0$. 
\end{Lemma}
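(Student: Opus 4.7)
My plan is to reduce the claim, via the eigenfunction expansion from Proposition \ref{normeigen} and Corollary \ref{normneg}, to a summability statement for the eigenvalues $\lambda_j$ of $-\Delta_h+m^2$ on the compact manifold $\Sigma$.

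First I would localize. Since $\Sigma$ is compact, $H^s_{\mathrm{loc}}(M\times M)$-membership is controlled by cutoffs in the time variables alone, so it suffices to show $(\chi(t)\chi(s))K_G\in H^{-1/2-\epsilon}(\R^2\times\Sigma^2)$ for any $\chi\in C_0^\infty(\R)$. Reading off from \eqref{kernel}, the expansion of $(\chi\otimes\chi)K_G$ in the basis $\{\phi_j\otimes\phi_k\}$ is diagonal, with
\begin{equation*}
u_{jk}(t,s)=-\delta_{jk}\,\lambda_j^{-1}\,\chi(t)\chi(s)\sin\bigl(\lambda_j(t-s)\bigr).
\end{equation*}

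Next I would compute the Fourier transform in $(t,s)$. Writing $\sin(\lambda_j(t-s))=\frac{1}{2i}(e^{i\lambda_j(t-s)}-e^{-i\lambda_j(t-s)})$ gives
\begin{equation*}
\mathcal F u_{jj}(\xi_0,\eta_0)=-\frac{\lambda_j^{-1}}{2i}\bigl[\hat\chi(\xi_0-\lambda_j)\hat\chi(\eta_0+\lambda_j)-\hat\chi(\xi_0+\lambda_j)\hat\chi(\eta_0-\lambda_j)\bigr].
\end{equation*}
Since $\hat\chi$ is Schwartz, $\|\hat\chi\|_{L^2(\R)}$ is finite, and by Fubini
$\int_{\R^2}|\mathcal F u_{jj}|^2\,d\xi_0 d\eta_0\lesssim \lambda_j^{-2}\|\hat\chi\|_{L^2}^{4}$, uniformly in $j$.

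Now I would apply Corollary \ref{normneg} with $\theta=-\tfrac14-\tfrac{\epsilon}{2}\in[-1,0]$, i.e.\ $s=2\theta=-\tfrac12-\epsilon$. Since $s<0$ and $|\xi_0|^2+|\eta_0|^2+2\lambda_j^2\ge 2\lambda_j^2$,
\begin{equation*}
(|\xi_0|^2+|\eta_0|^2+2\lambda_j^2)^{s}\le (2\lambda_j^2)^{s}=2^{s}\lambda_j^{2s},
\end{equation*}
and so
\begin{equation*}
\|(\chi\otimes\chi)K_G\|^2_{H^{s}(\R^2\times\Sigma^2)}\;\lesssim\;\sum_{j}\lambda_j^{2s}\cdot\lambda_j^{-2}\,\|\hat\chi\|_{L^2}^{4}\;=\;C\sum_j \lambda_j^{2s-2}.
\end{equation*}

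The main obstacle is to make this last sum converge; this is where the ``eigenvalue asymptotics for elliptic operators of low regularity'' advertised in the abstract enter. On the compact $3$-manifold $\Sigma$, Weyl-type asymptotics for $-\Delta_h+m^2$ still hold for a metric of class $C^\tau$ with $\tau>2$, and even $C^{1,1}$, yielding (at the least) a lower bound $\lambda_j\gtrsim j^{1/3}$. Therefore $\sum_j \lambda_j^{2s-2}\lesssim \sum_j j^{(2s-2)/3}<\infty$ precisely when $s<-\tfrac12$, which is exactly what $s=-\tfrac12-\epsilon$ provides. This completes the reduction and proves the lemma.
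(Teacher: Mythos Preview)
Your proof is correct and follows essentially the same route as the paper's: localize in time, use the diagonal eigenfunction expansion together with Corollary~\ref{normneg}, bound the weight $(|\xi_0|^2+|\eta_0|^2+2\lambda_j^2)^s$ by $(2\lambda_j^2)^s$ since $s<0$, reduce to $\sum_j\lambda_j^{-3-2\epsilon}$, and finish with Weyl asymptotics for non-smooth metrics. The paper carries out the same steps, citing \cite[Theorem~1.1]{zielinski} for the required eigenvalue bound $\lambda_l^2\gtrsim l^{2/3}$; the only cosmetic difference is that the paper controls the $(\xi_0,\eta_0)$-integral via the rapid decay bound $|\hat\psi_i(\cdot\pm\lambda_j)|\le C\langle\cdot\pm\lambda_j\rangle^{-N}$ rather than directly invoking $\|\hat\chi\|_{L^2}$.
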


\begin{proof}
This follows from Corollary \ref{normneg} similar to the computation in \cite[Theorem 4.10]{ground}.

\end{proof}

It will be useful to consider the  following bidistribution, $K_A$ that satisfies  $\partial_{t}K_A=K_G$.
\begin{Corollary}
Let $K_A\in \mathcal D'(M\times M)$  be the bidistribution given by 
\begin{equation*}
K_A(u\otimes v):=\int_{M}\left(\int_{-\infty}^{\infty}\sum_j \lambda_j^{-2} \cos(\lambda_j(t-s))\phi_j(x)\int_\Sigma\phi_j(y)v(s,y)\sqrt{h(y)}dyds\right)u(t,x)\sqrt{h(x)}dxdt,
\end{equation*}

Then,
\begin{align}\label{normcor}
K_A\in H_{loc}^{\frac{1}{2}-\epsilon}(M\times M) \text{ for every } \epsilon >0.
\end{align}
\end{Corollary}

\begin{proof}
This follows from Proposition \ref{normeigen} similar to the computation in \cite[Corollary 4.11]{ground}.
\end{proof}

\subsubsection{Wavefront Set Estimates}

{Now we show some some helpful lemmas in order to prove Theorem \ref{main} and Theorem \ref{main3} which are the main results of the section.}

First, we establish the microlocal  regularity of $K_G$ outside the set $\Char(P)\times\Char(P)$.

{In the following proofs, we use the distribution $K_A$, because a direct application of Theorem \ref{elliptic} for $K_G$ is not possible, since for $\delta$ close to $1$ the above $\sigma$ cannot take the value  $-\frac{1}{2}$.}

\begin{Lemma}\label{char}
For $\tau>2$ and any $\tilde{\epsilon}>0$ 
\begin{equation}
WF^{-\frac{1}{2}-\tilde{\epsilon}+\tau}(K_G)\subset \Char(P)\times\Char(P).
\end{equation} 
\end{Lemma}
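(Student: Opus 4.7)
The plan is to use the bisolution property $P_{(t,x)} K_G = P_{(s,y)} K_G = 0$ (from the exact sequence $PG = GP = 0$) together with Theorem \ref{elliptic} to localize $WF^{\tau - 1/2 - \tilde\epsilon}(K_G)$ inside $\Char(P_{(t,x)}) \cap \Char(P_{(s,y)})$, and then to remove the pieces of this intersection where one of the covariables vanishes by direct Fourier analysis of the spectral representation \eqref{kernel}.

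First, since $K_G \in H_{loc}^{-1/2-\epsilon}(M\times M)$ for every $\epsilon > 0$ by Lemma \ref{normkg}, I apply Theorem \ref{elliptic} with the parameter $\sigma = \tau - 5/2 - \tilde\epsilon$, so that $\sigma + 2 = \tau - 1/2 - \tilde\epsilon$; the required regularity $K_G \in H^{2+\sigma-\tau+\epsilon'}$ is ensured for small enough $\epsilon' < \tilde\epsilon$. This yields $WF^{\tau-1/2-\tilde\epsilon}(K_G) \subset \Char(P_{(t,x)})$, and symmetrically $\subset \Char(P_{(s,y)})$. Using the explicit decomposition \eqref{char3}, the intersection equals
\begin{equation*}
[\Char(P) \times \Char(P)] \cup [\{\txi = 0\} \cap (T^*M \times \Char(P))] \cup [(\Char(P) \times T^*M) \cap \{\teta = 0\}].
\end{equation*}

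It remains to rule out the last two pieces. Fix $\mathbf p = (\tx_0, 0; \ty_0, \teta_0)$ with $(\ty_0, \teta_0) \in \Char(P)$ (the other case is symmetric). Since $\teta_0$ lies on the null cone, its time component is nonzero. Pick a product cutoff $\psi = \psi_1(t)\psi_2(s)\chi_1(x)\chi_2(y)$ nonvanishing at the base point. From \eqref{kernel},
\begin{equation*}
\widehat{\psi K_G}(\xi_0,\xi,\eta_0,\eta) = -\sum_j \frac{1}{2i\lambda_j}\bigl[\hat\psi_1(\xi_0-\lambda_j)\hat\psi_2(\eta_0+\lambda_j) - \hat\psi_1(\xi_0+\lambda_j)\hat\psi_2(\eta_0-\lambda_j)\bigr]\widehat{\chi_1\phi_j}(\xi)\widehat{\chi_2\phi_j}(\eta).
\end{equation*}
In a sufficiently narrow conic neighborhood $\Gamma$ of $\mathbf p$ we have $|\xi_0| \ll |\eta_0|$ and $|\eta_0| \gtrsim r := |\txixi|$, hence $|\xi_0 + \eta_0| \gtrsim r$. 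For every $j$, $\max(|\xi_0 \mp \lambda_j|, |\eta_0 \pm \lambda_j|) \geq |\xi_0 + \eta_0|/2 \gtrsim r$, so the Schwartz decay of $\hat\psi_1, \hat\psi_2$ yields $|\hat\psi_1(\xi_0 \mp \lambda_j)\hat\psi_2(\eta_0 \pm \lambda_j)| \leq C_N r^{-N}$ for every $N$, uniformly in $j$. Combined with the Cauchy--Schwarz estimate
\begin{equation*}
\sum_j \lambda_j^{-1}|\widehat{\chi_1\phi_j}(\xi)||\widehat{\chi_2\phi_j}(\eta)| \leq \|A^{-1/4}(\chi_1 e^{-i\xi\cdot})\|_{L^2}\|A^{-1/4}(\chi_2 e^{-i\eta\cdot})\|_{L^2} \leq C,
\end{equation*}
which follows from the boundedness of $A^{-1/4}$ on $L^2(\Sigma)$, this gives $|\widehat{\psi K_G}(\txixi)| \leq C_N r^{-N}$ in $\Gamma$ for every $N$. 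Therefore $\int_\Gamma |\txixi|^{2s}|\widehat{\psi K_G}|^2\, d\txixi < \infty$ for every $s$, so $\mathbf p \notin WF^s(K_G)$ for any $s$.

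The main technical obstacle is justifying the elliptic-regularity step when $\tau$ is only slightly above $2$: the constraint $\sigma \geq 0$ in Theorem \ref{elliptic} forces $\tau > 5/2$ to pair with the low regularity $K_G \in H^{-1/2-\epsilon}$, so covering $2 < \tau \leq 5/2$ requires a divergence-structure refinement of the elliptic estimate analogous to the one indicated for propagation in Remark \ref{taylor}.
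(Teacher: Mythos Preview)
Your argument is sound for $\tau>5/2$ but, as you yourself note, the constraint $0\le\sigma$ in Theorem~\ref{elliptic} blocks the range $2<\tau\le 5/2$. The paper does \emph{not} resolve this by a divergence-structure refinement of the elliptic estimate; instead it passes to the auxiliary kernel $K_A$ (with $\partial_t K_A=K_G$), which by the corollary after Lemma~\ref{normkg} lies in $H_{loc}^{\frac12-\epsilon}$, i.e.\ one full order above $K_G$. Applying Theorem~\ref{elliptic} to $K_A$ with $\sigma=\tau-\tfrac32-\epsilon$ then needs only $\tau>\tfrac32$, comfortably covering the whole range $\tau>2$, and one descends to $K_G$ via $WF^{-\frac12-\tilde\epsilon+\tau}(\partial_t K_A)\subset WF^{\frac12-\tilde\epsilon+\tau}(K_A)$. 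So the missing idea is precisely the introduction of $K_A$; without it the stated lemma is not proven in the full range.

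Your direct Fourier analysis for excluding the sets $\{\txi=0\}$ and $\{\teta=0\}$ is correct but heavier than necessary. The paper instead observes that the kernel depends only on $t-s$, so $(\partial_t+\partial_s)K_G=0$, which forces $\xi^0+\eta^0=0$ on $WF(K_G)$. If $\teta=0$ then $\eta^0=0$, hence $\xi^0=0$, and since $(\tx,\txi)\in\Char(P)$ means $(\xi^0)^2=h^{ij}\xi_i\xi_j$, this forces $\txi=0$ as well, contradicting $\txixi\neq0$. This two-line argument replaces your entire spectral computation and has the advantage of not touching the eigenfunction expansion at all.
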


\begin{proof}
This is an application of Theorem \ref{elliptic}, the observation that $K_A$ satisfies $(\partial_t+\partial_s)K_A=0$ and
$WF^{-\frac{1}{2}-\tilde{\epsilon}+\tau}(K_G)\subset WF^{\frac{1}{2}-\tilde{\epsilon}+\tau}(K_A)$.
The proof is along the lines \cite[Lemma 4.13]{ground}
\end{proof}

Now we establish that points above the diagonal are of a specific form.

\begin{Lemma}\label{diag3}
 If $(\tx,\txi,\tx,\teta)\in WF^{-\frac{3}{2}-\tilde{\epsilon}+\tau}(K_G)$ for {$\tau>2$} and some $\tilde{\epsilon}>0$, then $\teta=-\txi$.
\end{Lemma}

\begin{proof}
This is a consequence of Theorem \ref{taylor} combined with the support properties of $K_G$. The proof is along the lines of that for \cite[Lemma 4.16]{ground}

\end{proof}

Now we state one of the main results:

\begin{Theorem}\label{main}
Let $(M,g)$ be a $C^{\tau}$ ultrastatic spacetime with $\tau>2$ and $K_G$ the causal propagator. Then {$WF'^{-\frac{3}{2}-{\epsilon}+\tau}(K_G)\subset C$ for
every ${\epsilon}>0$} and $C$ as in Eq.\eqref{C}. 
\end{Theorem}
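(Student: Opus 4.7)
The plan is to reduce the theorem to the diagonal case already handled by Lemma~\ref{diag2}, by sliding the first argument of a wavefront point along a null bicharacteristic until it meets the second, then invoking that lemma. Unfolding the $WF'$ convention, it is enough to show that every $(\tx_0,\txi_0,\ty_0,\teta_0)\in WF^{-3/2-\epsilon+\tau}(K_G)$ satisfies $(\tx_0,\txi_0)\sim(\ty_0,-\teta_0)$ in the parallel-transport sense of \eqref{C}. Since $WF^{-3/2-\epsilon+\tau}(K_G)\subset WF^{-1/2-\tilde{\epsilon}+\tau}(K_G)$ for any $\tilde{\epsilon}>0$, Lemma~\ref{char} guarantees that both $(\tx_0,\txi_0)$ and $(\ty_0,\teta_0)$ are null covectors, so we are already on $\Char(P)\times \Char(P)$.

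The decisive step is propagation of singularities for $K_G$ along the Hamiltonian flow of $p_2$ in the first variable. I would apply the divergence-form statement of Remark~\ref{taylor} to $u=K_G$, $f=P_{(t,x)}K_G=0$, at regularity $s=-3/2-\epsilon+\tau$. Lemma~\ref{normkg} gives $K_G\in H^{-1/2-\epsilon'}_{loc}$, so choosing $\delta<1$ close enough to $1$ and $\epsilon'$ small places $s$ inside the admissible window $-2(1-\delta)<s\le 2$ and yields the required hypothesis $K_G\in H^{1+s-\tau\delta}_{loc}$. The remark then shows that the whole integral curve $r\mapsto(\gamma_1(r),\ty_0,\teta_0)$, where $\gamma_1$ is the bicharacteristic of $H_{p_2}$ through $(\tx_0,\txi_0)$, sits in $WF^{-3/2-\epsilon+\tau}(K_G)$. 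In the ultrastatic setting $\dot t=2\xi_0\neq 0$ on the null cone, so $\gamma_1$ crosses every Cauchy slice; choose $r_*$ so that $\gamma_1(r_*)=(\tx^*,\txi^*)$ with $\tx^*\in\{s_0\}\times\Sigma$, where $\ty_0=(s_0,y_0)$. Wavefront-set membership forces $(\tx^*,\ty_0)\in\supp K_G$; if $\tx^*\neq \ty_0$, these two points of the same Cauchy surface are spacelike separated, contradicting Lemma~\ref{causal}, so $\tx^*=\ty_0$. The diagonal point $(\ty_0,\txi^*,\ty_0,\teta_0)$ thus lies in $WF^{-3/2-\epsilon+\tau}(K_G)$, and Lemma~\ref{diag2} delivers $\teta_0=-\txi^*$. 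Hence $(\ty_0,-\teta_0)$ is the parallel transport of $(\tx_0,\txi_0)$ along the null geodesic underlying $\gamma_1$, which is precisely the defining relation of $C$.

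The main technical obstacle is justifying propagation at the target Sobolev level $s=-3/2-\epsilon+\tau$ rather than at the cheaper level $s=-1/2-\epsilon+\tau$ used for $K_A$ in Lemma~\ref{diag2}: this requires a delicate choice of the smoothing parameter $\delta$ so that both the global regularity of $K_G$ and the non-smooth remainder $P_{(t,x)}^b$ fit into the window of Remark~\ref{taylor}. Should the upper bound $s\le 2$ in that remark become restrictive for large $\tau$, a safe fallback is to propagate singularities of the smoother auxiliary bidistribution $K_A$ at level $s+1$ instead, and transfer the conclusion back to $K_G$ via $\partial_t K_A=K_G$ together with the inclusion $WF^{-1/2-\epsilon+\tau}(K_A)\subset WF^{-1/2-\epsilon+\tau}(K_G)$ already established in the proof of Lemma~\ref{diag2}.
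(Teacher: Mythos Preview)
Your argument is correct and closely parallels the paper's, with two differences worth noting. First, the paper never attempts to propagate $K_G$ directly: it works with $K_A$ from the start, applying Theorem~\ref{propagation} at $s=-\tfrac32-\epsilon+\tau$ to obtain $(\gamma(\tx,\txi),\gamma(\ty,-\teta))\subset WF^{-1/2-\epsilon+\tau}(K_A)$, and then passes back to $K_G$ via the inclusion \eqref{inclusion}. Your primary route through Remark~\ref{taylor} applied to $K_G$ is hemmed in by the ceiling $s\le 2$, so it only covers $2<\tau\le \tfrac72+\epsilon$; your fallback through $K_A$ is exactly the paper's argument and removes that restriction, so in practice the paper's proof coincides with your fallback. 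Second, your reduction to the diagonal is a little cleaner: you propagate only in the first variable until the time coordinate hits $s_0$, whereas the paper propagates in both variables and then intersects with an auxiliary Cauchy surface $\Sigma_{t_1}$; both land on a diagonal point where Lemma~\ref{diag2} applies. One minor slip: the inclusion $WF^{-3/2-\epsilon+\tau}(K_G)\subset WF^{-1/2-\tilde\epsilon+\tau}(K_G)$ holds only for $\tilde\epsilon\le 1+\epsilon$, not for all $\tilde\epsilon>0$, but any single such $\tilde\epsilon$ suffices to invoke Lemma~\ref{char}.
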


\begin{proof}

Let $(\tx,\txi,\ty,-\teta)\in WF^{-\frac{3}{2}-{\epsilon}+\tau}(K_{G})$.
The propagation of singularities result (Theorem \ref{propagation})  implies that  $(\gamma(\tx,\txi),\gamma (\ty,-\teta))\in { WF^{-\frac{1}{2}-\epsilon+\tau}}(K_A)$, where  $\gamma(\tx,\txi)$ is the null bicharacteristic with initial data $(\tx,\txi)$ and $\gamma(\ty,-\teta)$ is the null bicharacteristic with initial data $(\ty,-\teta)$. As a consequence of Lemma \ref{char}, Lemma 6.4 , the fact that $(\partial_t+\partial_s) K_G=0$ and the inclusion $WF^{s}(K_A)\subset WF^{s-1}(K_G)\cup \Char(\partial_t)$ for all $s\in\mathbb{R}$, we have  $(\gamma(\tx,\txi),\gamma (\ty,-\teta))\in {W F^{-\frac{3}{2}-\epsilon+\tau}}(K_G)$. Then we can apply Theorem \ref{taylor} combined with Lemma \ref{diag3} to obtain the result.
The proof is along the lines \cite[Theorem 4.17]{ground}.

\end{proof}

For the analysis of adiabatic states it is enough to work with the inclusion shown above. However, in the smooth case we have an equality of sets. 
In Theorem \ref{main3}, we show that this equality holds under stronger regularity assumptions on the metric.

First we show the following lemma

\begin{Lemma}\label{diag1}
Let $(\tx,\txi)\in \Char(P)$ with $P$ as in Eq. \eqref{kg}.Then   $(\tx,\tx,\txi,-\txi)\in WF^{\frac{3}{2}+{\epsilon}}(K_G)$ for all ${\epsilon}>0.$
\end{Lemma}

\begin{proof}
Since $WF^{s_1}\subset WF^{s_2}$ for $s_1\le s_2$, it is enough to show the result for small $\epsilon$.
Let  $Q:=\R\times\Sigma^2$. We define the embedding $f:Q\rightarrow M\times M$ by $f(s,x,y)=(s,x,s,y)$.  The set of normals of the map $f$ is 
\begin{align*}
 N_{f}&=\{(f(s,x,y),\txi,\teta)\in T^*(M\times M); ^tf'(s,x,y)(\txi,\teta)=0\}\\
&=\{(s,x,s,y,\xi^0,0,-\xi^0, 0)\in T^*(M\times M)\},
\end{align*}
where $^tf'$ is the transpose of the differential of $f$. 
{ In particular, $N_f\cap (\Char P\times \Char P) = \emptyset$.} 
 By Lemma \ref{char}, 
\begin{equation*}
WF^{\frac{3}{2}+{\epsilon}}(K_G)\cap N_{f}=\emptyset
\end{equation*}

and therefore 

\begin{equation*}
WF^{\frac{1}{2}+\epsilon}(\partial_t K_G)\cap N_f\subset  WF^{\frac{3}{2}+{\epsilon}}(K_G)\cap N_{f}=\emptyset
\end{equation*}

for suitably small $\epsilon>0$.
Therefore Proposition B.7 from \cite{adiabatic} implies that the restriction of   
$\partial_tK_G$ to ${Q}$ is defined and satisfies 
\begin{align}\label{diagonal}
WF^{\epsilon}(\partial_tK_G|_{Q})&\subset f^*(WF^{{\frac12+\epsilon}}(\partial_tK_G))\\
&=\{(s,x,y, ^tf'(\txi,\teta))\in T^*Q; (f(s,x,y),\txi,\teta)\in {WF^{\frac12+\epsilon}}(\partial_t K_G)\}\nonumber.
\end{align} 
As a distribution,  $\partial_t{K_G}|_Q$ is  given by 
$$\partial_tK_G|_{Q}(s,x,y)=-\sum_j \phi_j(x)\phi_j(y),$$
i.e.,  it acts on the {non-smooth} density  $\psi_1(s)\psi_2(x)\psi_3(y)\sqrt{h}(x)\sqrt{h}(y)dxdy$,   by  
\begin{equation}
\langle \partial_t K_G|_{Q},\psi_1\psi_2\psi_3\rangle=-\int_{-\infty}^\infty \psi_1(p)dp\int_\Sigma\psi_2(w)\psi_3(w)\sqrt{h(w)}dw.
\end{equation}
Therefore its Fourier transform is given by 
\begin{equation}
({\cal{F}}(\partial_tK_G|_{Q}))({\chi},\xi,\eta)= 
\delta_0(\chi)\otimes \int_\Sigma e^{-i w(\xi+\eta)}\sqrt{h(w)}dw.
\end{equation}

{Moreover, we have $(\partial_tK_G|_{Q}-1\otimes\delta(x-y))(\psi)=0$ for all smooth densities on $\mathbb{R}\times\Sigma\times \Sigma$. Therefore $\partial_tK_G|_{Q}=1\otimes\delta(x-y)$ as elements of ${\cal{D}}'(\mathbb{R}\times \Sigma\times \Sigma)$.  This implies

$$
WF^s(\partial_tK_G|_{Q})=
\begin{cases}
\emptyset, s<-\frac{3}{2}\\
(s,x,x,0,\xi,-\xi) \text{ for all } \xi\in T^*_{x}\Sigma, s\ge -\frac{3}{2}.
\end{cases}
$$

}

%

Using Eq. (\ref{diagonal}) we find that there exists $\xi_0$ such that  $(s,x,s,x,\xi^0,\xi, -\xi^0,-\xi)\in WF^{\frac12+\epsilon }(\partial_t K_G)$ for each $\xi \in T^*\Sigma$.  

 According to Proposition B.3 from \cite{adiabatic} 
\begin{equation}
WF^{\frac{1}{2}+\epsilon }(\partial_t K_G)\subset WF^{\frac{3}{2}+\epsilon }( K_G).
\end{equation}

Since the wavefront set is contained in $\Char(P)\times\Char(P)$ we obtain from Lemma \ref{char}    $(s,x,s,x,\xi^0,\xi, -\xi^0,-\xi)\in\Char(P)\times\Char(P)$ with $\xi_0^2=h^{ij}\xi_i\xi_j$. Without loss of generality we choose a sign for $\xi_0$ i.e. $\xi_0:=\sqrt{h^{ij}\xi_i\xi_j}$.

Now we show that if $(s,x,s,x,\xi^0,\xi, -\xi^0,-\xi)\in WF^{\frac{3}{2}+\epsilon }( K_G)$ then  $(s,x,s,x,-\xi^0,-\xi, \xi^0,\xi)\in WF^{\frac{3}{2}+\epsilon }( K_G)$.
{
The diffeomorphism $f_1(t,x,s,y)=(s,y,t,x)$ { has  the set of normals $N_{f_1}=\{(s,y,t,x,0,0,0,0)\in T^*(M\times M)\}$ } which has empty intersection with $WF(K_G)$. Then \cite[Theorem 8.2.3]{hormander1} and the invariance of the Sobolev wavefront set implies that

\begin{equation}
WF^{\frac{3}{2}+\epsilon }( f_1^* K_G)=f_1^*WF^{\frac{3}{2}+\epsilon }( K_G).
\end{equation}

Moreover $f_1^*K_G=-K_G$ which gives 

\begin{equation}\label{pulltime}
WF^{\frac{3}{2}+\epsilon }( K_G)=f_1^*WF^{\frac{3}{2}+\epsilon }( K_G).
\end{equation}

Now since  $(s,x,s,x,\xi^0,\xi, -\xi^0,-\xi)\in WF^{\frac{3}{2}+\epsilon }(K_G)$ then  we have  $(s,x,s,x,-\xi^0,-\xi, \xi^0,\xi)\in WF^{\frac{3}{2}+\epsilon }(K_G)$ by Eq.\eqref{pulltime}.

Notice that we also  have to show that $(s,x,s,x,-\xi^0,\xi, \xi^0,-\xi)$ and $(s,x,s,x,\xi^0,-\xi, -\xi^0,\xi)$ are in $ WF^{\frac{3}{2}+\epsilon }(K_G)$.

In this case we use the diffeomorphism $f_2(t,x,s,y)=(s,x,t,y)$ {that has  the set of normals $N_{f_2}=\{(s,x,t,y,0,0,0,0)\in T^*(M\times M)\}$ } which has empty intersection with $WF(K_G)$. Then \cite[Theorem 8.2.3]{hormander1} and the invariance of the Sobolev wavefront set implies that

\begin{equation}
WF^{\frac{3}{2}+\epsilon }( f_2^* K_G)=f_2^*WF^{\frac{3}{2}+\epsilon }( K_G).
\end{equation}

Moreover $f_2^*K_G=-K_G$ which gives 

\begin{equation}\label{pulltime2}
WF^{\frac{3}{2}+\epsilon }( K_G)=f_2^*WF^{\frac{3}{2}+\epsilon }( K_G).
\end{equation}

Now since  $(s,x,s,x,\xi^0,\xi, -\xi^0,-\xi)\in WF^{\frac{3}{2}+\epsilon }(K_G)$ then we have  $(s,x,s,x,-\xi^0,\xi, \xi^0,-\xi)\in WF^{\frac{3}{2}+\epsilon }(K_G)$ by Eq.\eqref{pulltime2}. Using $f_1$ we obtain  $(s,x,s,x,\xi^0,-\xi, -\xi^0,\xi)\in WF^{\frac{3}{2}+\epsilon }(K_G)$.  This gives the desired result. }

\end{proof}

Now we show the equality of sets as in the smooth case. 

\begin{Theorem}\label{main3}
{Let $(M,g)$ be a $C^{\tau}$ ultrastatic spacetime with $\tau>3$ and $K_G$ the causal propagator. Then  $C\subset WF'^{-\frac{3}{2}+\tau-\tep}(K_G)$ for all $\tep<\tau-3$ and $C$ as in Eq.\eqref{C}. In particular, we have  $C\subset WF'^{s}(K_G)$ for all $s>\frac{3}{2}$.}
\end{Theorem}

\begin{proof}
{
Under the additional regularity assumption and arguing locally as in Theorem \ref{elliptic}, we have $P^b_{(t,x)}K_A,P^b_{(s,y)}K_A\in H^{\frac{3}{2}+\tep}(M\times M)$} and therefore for $(\tx,\txi,\ty,\teta)\in WF^{\frac{3}{2}+\tilde{\epsilon}}(K_G)\subset WF^{\frac{5}{2}+\tilde{\epsilon}}(K_A) $ we can choose $s=\frac{3}{2}+\tilde{\epsilon}$ in Theorem \ref{propagation}. 

Now if $(\txx,\txixi)=(\tx,\txi,\ty,-\teta)\in C'$  then  there is a null geodesic $\gamma$  such that $\gamma(t_1)=\tx,\gamma(t_2)=\ty$  and $g(\cdot, {\dot{\gamma}})|_{T_{\tx}M}=\txi, g(\cdot, {\dot{\gamma}})|_{T_{\ty}M}=\teta$. Now, $(\tx,\txi,\tx,-\txi)\in C'$ and by Lemma \ref{diag1} $(\tx,\txi,\tx,-\txi)\in WF^{\frac{3}{2}+\epsilon}(K_G)$ for $\epsilon>0$ which implies for $\tilde{\epsilon}<\tau-3$ that {$(\tx,\txi,\tx,-\txi)\in WF^{-\frac{3}{2}+\tau-\tilde{\epsilon}}(K_G)\subset  WF^{-\frac{1}{2}+\tau-\tilde{\epsilon}}(K_A)$. Applying Theorem \ref{propagation} to $P_{(t,x)}K_A,P_{(s,y)}K_A$ with the $s$ described above we have 
$(\gamma(\tx,\txi),\gamma(\tx,-\txi))\in WF^{-\frac{1}{2}+\tau-\tep}(K_A)$. Using the same argument as in Theorem \ref{main}  this implies $(\tx,\txi,\ty,-\teta)=(\txx,\txixi)\in WF^{-\frac{3}{2}+\tau-\tep}(K_G)$.}
\end{proof}

{
\begin{rem}
The combination of Theorem \ref{main} with Theorem \ref{main3} gives $$WF'^{-\frac{3}{2}+\tau-\tilde{\epsilon}}(K_G)=C$$
for $\tau>3$ and {$\tep<\tau-3$.}
\end{rem}
}

\subsubsection{The $C^{1,1}$ Case}
The following theorem states the result for the case of $C^{1,1}$ regularity.
{
\begin{Theorem}\label{main2}
Let $(M,g)$ be a $C^{1,1}$ ultrastatic spacetime and $K_G$ the causal propagator. Then $WF'^{\frac{1}{2}-\tilde{\epsilon}}(K_G)\subset C$ for all $\tep>0$. %
\end{Theorem}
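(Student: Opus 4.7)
The plan is to rerun the proof of Theorem \ref{main} at the borderline regularity $\tau=2$, exploiting the embedding $C^{1,1}\subset C^2_*$ into the Zygmund class of order two. Taylor's remark following Theorem \ref{propagation} explicitly states that Zygmund regularity $C^2_*$ of the metric is sufficient for propagation of singularities, which is the one delicate ingredient; all other tools (symbol smoothing, the microlocal parametrix, the elliptic estimate of Theorem \ref{elliptic}) carry over with H\"older estimates replaced by Zygmund estimates.

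First I would establish the analogue of Lemma \ref{char} for $\tau=2$. The symbol smoothing yields $p_2^\#\in S^2_{1,\delta}$, $p_2^b\in C^2_* S^{2-2\delta}_{1,\delta}$ and $p_1^b\in C^1 S^{1-\delta}_{1,\delta}$, so the argument of Theorem \ref{elliptic} applies with any $\sigma<\tau-1=1$. Applied to $K_A\in H^{\frac{1}{2}-\epsilon/2}_{\text{loc}}(M\times M)$ with $\sigma=\frac{1}{2}-\epsilon$, this gives
\[ WF^{\frac{5}{2}-\tilde{\epsilon}}(K_A)\subset\Char(P)\times\Char(P), \]
and differentiation (via \cite[Proposition B.3]{adiabatic}) then yields
\[ WF^{\frac{3}{2}-\tilde{\epsilon}}(K_G)\subset\Char(P)\times\Char(P). \]

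Next I would adapt Lemma \ref{diag2}: the propagation index $s=\frac{1}{2}-\epsilon$ lies in the admissible range $(0,\tau-1)=(0,1)$, so applying Theorem \ref{propagation} to $P_{(t,x)}K_A=0$ and $P_{(s,y)}K_A=0$ along the null bicharacteristics through a putative diagonal point $(\tilde x,\tilde\xi,\tilde x,\tilde\eta)\in WF^{\frac{1}{2}-\tilde\epsilon}(K_G)$ with $\tilde\eta$ not proportional to $\tilde\xi$ would place a non-causally-related pair of base points into the projection of $WF^{\frac{1}{2}-\tilde\epsilon}(K_G)$, contradicting Lemma \ref{causal}; together with the constraint $\xi^0+\eta^0=0$ this forces $\tilde\eta=-\tilde\xi$. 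The main argument of Theorem \ref{main} then transfers verbatim: starting from $(\tilde x,\tilde\xi,\tilde y,-\tilde\eta)\in WF^{\frac{1}{2}-\tilde\epsilon}(K_G)$, propagating both variables to a common Cauchy surface and invoking the diagonal result produces a null geodesic joining $\tilde x$ to $\tilde y$ with $\tilde\xi$ and $\tilde\eta$ related by parallel transport, so $(\tilde x,\tilde\xi,\tilde y,-\tilde\eta)\in C'$, which is the primed condition $(\tilde x,\tilde\xi,\tilde y,\tilde\eta)\in WF'^{\frac{1}{2}-\tilde\epsilon}(K_G)\subset C$.

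The principal obstacle is technical rather than conceptual: one must verify that Taylor's symbol-smoothing estimates and the elliptic/propagation regularity results remain valid at exactly the borderline regularity $\tau=2$ when the metric is only of class $C^{1,1}=C^2_*$. Once this is confirmed, which is essentially the content of the Taylor remark combined with $C^{1,1}=C^2_*$, the geometric reasoning of Theorem \ref{main} applies with no further changes and yields the Sobolev index $\frac{1}{2}-\tilde\epsilon$ as claimed.
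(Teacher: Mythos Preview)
Your proposal is correct and follows essentially the same route as the paper: rerun the argument of Theorem \ref{main} at $\tau=2$, invoking $C^{1,1}\subset C^2_*$ so that Theorem \ref{propagation} still applies, checking that Theorem \ref{elliptic}, Lemma \ref{causal}, and Lemma \ref{normkg} go through unchanged, and then observing that $P^b_{(t,x)}K_A,P^b_{(s,y)}K_A\in H^{\frac{1}{2}-\tilde\epsilon}$ so that the propagation index $s=\frac{1}{2}-\tilde\epsilon$ is admissible in Lemma \ref{diag2} and Theorem \ref{main}. One small slip: at the end you write ``$C^{1,1}=C^2_*$'', but the correct (and sufficient) relation is the strict inclusion $C^{1,1}\subset C^2_*$ you stated at the outset.
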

}
{\em Proof of Theorem} \ref{main2}. 
In order to show the theorem we will state how different results of the paper change under this regularity. 

From the comment above Theorem \ref{propagation} we know that  Theorem \ref{propagation} still holds. {Notice that $C^{1,1}\subset C_*^2$ \cite[Chapter 1, Eq.(1.21)]{tools}}. 

Also, notice that a $C^{1,1}$ metric guarantees the existence and uniqueness of the Hamiltonian flow which is critical for the proof. 
  Theorem \ref{elliptic} holds even for $\tau>1$.

Lemma \ref{causal} requires no modification, since the results on global hyperbolicity still hold for this regularity \cite[Corollary 3.4]{clemens}. The hypothesis in \cite[Theorem 1.1]{zielinski} is the requirement that the coefficients of the principal part have one derivative that is Lipschitz which is clearly  satisfied in the $C^{1,1}$ case. Hence  Lemma \ref {normkg} holds.

For Lemma \ref{diag3} and Theorem \ref{main} the only thing to notice is that in this case $P_{(t,x)}^b K_A,P_{(s,y)}^b K_A\in H^{\frac12-\tep}(M\times M)$ (arguing locally as in Theorem \Ref{elliptic}) and therefore we can apply Theorem \ref{propagation} for $s= \frac{1}{2}-\tilde{\epsilon}$. In this section we 
have applied the version of Theorem \ref{propagation} after \cite[Proposition 11.4]{tools}.
\qed

\newpage
{\bf{Acknowledgements}}

We are grateful to  Chris Fewster, Bernard Kay and James Vickers for helpful discussions. We also thank the referees for valuable comments and suggestions.  \\

{\bf{Data Availability Statement}}

Data sharing not applicable to this article as no datasets were generated or analysed during the current study.

{\bf{Funding}}

The work of YESS has been partially funded by Next Generation EU through the project “Geometrical and Topological effects on Quantum Matter (GeTOnQuaM)”. The research activities of YESS have been carried out in the framework of the INFN Research Project QGSKY.

{\bf{ Conflicts of interests/Competing interests}}

The authors have no relevant financial or non-financial interests to disclose.
\bibliographystyle{plain}
\bibliography{biblio}


\Addresses

\end{document}